\documentclass[leqno,english,11pt]{smfart}

\paperheight=260mm
\paperwidth=195mm

\textheight=205mm
\textwidth=155mm

\evensidemargin=15mm
\oddsidemargin=25mm
\hoffset=-25mm
\voffset=-20mm
\topmargin=10mm
\headsep=7mm
\footskip=14mm


\usepackage{amsmath, amssymb}
\usepackage{amsfonts}
\usepackage{amscd}
\usepackage{enumerate}
\usepackage{pstricks}
\usepackage[dvips]{graphicx}
\usepackage{array,color,colortbl}
\usepackage{pstricks, pst-plot, pst-node}
\usepackage{subfigure}
\usepackage{lscape}
\usepackage{mdwlist}
\usepackage[francais, english]{babel}
\usepackage[all]{xy}
\usepackage{amsthm}
\usepackage[T1]{fontenc}
\usepackage[latin1]{inputenc} 
\usepackage[T1]{fontenc}
\usepackage{multind}
\usepackage{url}
\usepackage{hyperref}

\makeatletter
\gdef\th@break{\normalfont\slshape
  \def\@begintheorem##1##2{\item[%
       \rlap{\vbox{\hbox{\hskip \labelsep\theorem@headerfont ##1\ ##2}%
                   \hbox{\strut}}}]}%
\def\@opargbegintheorem##1##2##3{%
  \item[\rlap{\vbox{\hbox{\hskip \labelsep \theorem@headerfont
                     ##1\ ##2\ ##3}%
                    \hbox{\strut}}}]}}
\makeatother

\theoremstyle{plain}
\newtheorem{theorem}{Theorem}[section]

\newtheorem{proposition}[theorem]{Proposition}
\newtheorem{lemma}[theorem]{Lemma}

\theoremstyle{definition}
\newtheorem{definition}[theorem]{Definition}

\theoremstyle{remark}
\newtheorem{remark}[theorem]{Remark}
\newtheorem{example}[theorem]{Example}
\makeatletter

\DeclareMathOperator{\Aut}{Aut}
\DeclareMathOperator{\chr}{char}

\newcommand{\Ch}[2]{\begin{bmatrix} #1 \\ #2 \end{bmatrix}}
\DeclareMathOperator{\Disc}{Disc}

\DeclareMathOperator{\End}{End}
\DeclareMathOperator{\Gal}{Gal}
\DeclareMathOperator{\GL}{GL}
\DeclareMathOperator{\Hom}{Hom}
\DeclareMathOperator{\im}{Im}
\DeclareMathOperator{\Jac}{Jac}

\DeclareMathOperator{\Sym}{Sym}
\newcommand{\tp}[1]{{^{t}}\!#1}
\DeclareMathOperator{\Tr}{Tr}

\newcommand{\Mm}{\mathsf{M}}

\newcommand{\epsm}{\boldsymbol{\varepsilon}}

\newcommand{\oo}{\mathcal{O}}

\newcommand{\FF}{\mathbb{F}}
\newcommand{\CC}{\mathbb{C}}
\newcommand{\HH}{\mathbb{H}}

\newcommand{\PP}{\mathbb{P}}
\newcommand{\QQ}{\mathbb{Q}}
\newcommand{\RR}{\mathbb{R}}

\newcommand{\ZZ}{\mathbb{Z}}

\newcommand{\bfI}{\mathbf{I}}

\def\ie{\textit{i.e. }}

\usepackage{fancyhdr}
\pagestyle{fancy}
\usepackage{layouts}

\parindent 0cm


\fancyhead[LO]{{\footnotesize Christophe Ritzenthaler}}
\fancyhead[RE]{{\footnotesize Optimal curves of genus $1,2$ and $3$}}
\fancyfoot[LE,RO]{{\scriptsize  Publications math\'ematiques de Besan\c con - 2011}}
\fancyhead[LE,RO]{\thepage}
\fancyfoot[c]{}


\setcounter{page}{1}


\title[Optimal curves]{Optimal curves of genus $1,2$ and $3$}
\alttitle{Courbes optimales de genre $1,2$ et $3$}
	\author{Christophe Ritzenthaler}
\curraddr{Institut de Math{\'e}matiques de Luminy,
         UMR 6206 du CNRS,
         Luminy, Case 907, 13288 Marseille, France.}
\email{ritzenth@iml.univ-mrs.fr}
\thanks{The  author
acknowledges  partially supported by grant MTM2006-11391 from the Spanish MEC and by grant ANR-09-BLAN-0020-01 from the French ANR.}
\date{12 octobre 2010}
\subjclass {Primary 11G20, 11G10 Secondary 14K25, 14H45} 
\keywords{optimal curve, isogeny class, indecomposable polarization, hermitian module, Serre's obstruction, plane quartic, Siegel modular form, Hasse-Weil-Serre bound}

\begin{document}
\maketitle

\begin{abstract}
In this survey, we discuss the problem of the maximum number of points of curves of genus $1,2$ and $3$ over finite fields. 
\end{abstract}

\begin{altabstract}
Nous examinons la question du nombre maximum de points pour les courbes de genre $1,2$ et $3$ sur les corps finis.
\end{altabstract}

\section{Introduction}
The foundations of the theory of equations over finite fields were laid, among others, by mathematicians like Fermat, Euler, Gauss and Jacobi (see \cite{dickson}). 
Subsequently, there was little activity in the field at least until the end of the 19th century and the study of the zeta function of a curve. Initiated by Dedekind, Weber, Artin and Schmidt, this work led to an analogue of the Riemann hypothesis which was proved by Hasse in the case of elliptic curves and then by Weil in general in 1948 (see \cite{weilbook}). The third, modern, period starts in 1980 with the work of Goppa \cite{goppa-art, goppa}. His construction of error-correcting codes with good parameters from curves over finite fields renewed the interest in this theory.\\
With this application in mind, the theory has focused on the maximum number of points of a (projective, geometrically irreducible, non singular) curve of genus $g$ over a finite field $k=\FF_q$, denoted $N_q(g)$. Asymptotic results, \ie values of $N_q(g)/g$ when $g$ goes to infinity and $q$ is fixed, drew attention first,
 but Serre, in his lectures at Harvard \cite{serre-harvard}, gave equal treatment to the `dual' case, \ie  values of $N_q(g)$ when $g$ is fixed and $q$ varies. It quickly appeared that determining $N_q(g)$ was a hard problem and as soon as $g \geq 3$, only sparse values are known (see for instance the web page \url{www.manypoints.org} for the best estimates when $q$ is small).\\  
 In this survey, we are going to describe the main ideas that have been developed  to deal with the cases $1 \leq g \leq 3$. It is interesting to note that for each value of $g$, we will be confronted not only to harder computations but also to a completely new kind of issue. In order to emphasize this progression, we will not consider the actual value of $N_q(g)$ but the following sub-problem. As we shall recall in Section \ref{bounds}, $N_q(g) \leq 1+q+g \lfloor 2 \sqrt{q} \rfloor$ and we can wonder when   $N_q(g)$ reaches this bound.
 If it does, a curve with this number of points is called optimal and we are going to ask for which values of $q$ such curves exist.\\
  When $g \leq 3$, the classical game to prove or disprove the existence of optimal curves is
 \begin{enumerate}
 \item to prove the existence (or not) of an abelian variety $A/k$ with a `good' Weil polynomial (Section \ref{isogenyclass}). This is going to control the number of points on a possible curve $C/k$ such that $\Jac C \simeq A$.
 \item to put a good polarization $a$ on A such that $(A,a)/k$ is geometrically (\ie over $\bar{k}$) the Jacobian of a curve $\bar{C}$ with its canonical polarization (Section \ref{existencepp}).  
  \item to see if $\bar{C}$ admits a model $C/k$ such that $(\Jac C,j) \simeq (A,a)$ (where $j$ is the canonical polarization of $C$). We will see that if $\bar{C}$ is non hyperelliptic, there can be an obstruction to this descent (see Section \ref{optimal}) and for $g=3$, we will propose solutions to address the computation of the obstruction (see Section \ref{g3}).
 \end{enumerate}
  Most ideas we are going to present here are already contained in \cite{serre-harvard} but our proofs for $g=1$ and $2$ are sometimes different from the original ones and take advantage of subsequent simplifications of the theory. \\

\noindent
{\bf Conventions and notation.}
In the following $g \geq 1$ is an integer and  $q=p^n$ with $p$ a prime and $n>0$ an integer. The letter $k$ denotes the finite field $\FF_q$ and $K$ any perfect field. When we speak about a \emph{genus $g$ curve} we mean that the curve is projective, geometrically irreducible and non-singular. 
If $A$ and $B$ are varieties over a field $K$, when we speak of a morphism from $A$ to $B$ we always mean  a morphism \emph{defined over $K$}. So, for instance $\End(A)$ is the ring of endomorphisms defined over $K$, $A \sim B$ means $A$ isogenous to $B$ over $K$, etc. If $(A,a)$ and $(B,b)$ are polarized abelian varieties, by an isomorphism between them, we always mean `as polarized abelian varieties'. \\ 

\noindent
{\bf Acknowledgements.}
 I would like to thank Christian Maire for suggesting me to write this survey. This is part of my `habilitation' thesis \cite{HDRritz} which was defended during the workshop Theory of Numbers and Applications which was organized by Karim Belabas and Christian Maire in Luminy in December 2009. I am really grateful to Detlev Hoffmann for the references of Remark \ref{cashigher} and to the Number Theory List community and particularly to Samir Siksek for helping me with Remark \ref{m2}. 

\section{Control of the isogeny class} \label{isogenyclass}

\subsection{Bounds} \label{bounds}
Let $C/k$ be a genus $g$ curve. We recall that its \emph{Weil polynomial} $\chi_C$ is the Weil polynomial of $\Jac C/k$, \ie the characteristic polynomial of the action of the $k$-Frobenius endomorphism on an $\ell$-adic Tate module for any prime $\ell \ne p$. It is well known that it can be written
 $$\chi_C=\prod_{i=1}^g (X^2+x_i X+q) \in \ZZ[X]$$ with $x_i \in \RR$ and $|x_i| \leq 2\sqrt{q}$. Since 
$$\# C(k)=q+1+\sum_{i=1}^g x_i,$$
it is clear that $\# C(k) \leq 1+q+ \lfloor 2g \sqrt{q} \rfloor$, which is known as \emph{Hasse-Weil bound} \cite{weilbook} and so $N_q(g)$ is less than this bound too. It is possible to improve this bound as the following lemma shows.
\begin{lemma}[{Hasse-Weil-Serre bound \cite{serre-point1}}]
Let $m=\lfloor  2 \sqrt{q}\rfloor$. Then $$N_q(g) \leq 1+q+gm.$$
\end{lemma}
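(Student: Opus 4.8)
The plan is to reduce the statement to an inequality about the real numbers $x_i$ appearing in $\chi_C=\prod_{i=1}^{g}(X^2+x_iX+q)$: since $\#C(k)=q+1+\sum_{i=1}^g x_i$ and $N_q(g)$ is the maximum of $\#C(k)$ over genus $g$ curves $C/k$, it suffices to show $\sum_{i=1}^{g}x_i\le gm$ for each such $C$. Bounding the $x_i$ individually by $2\sqrt q$ only recovers the Hasse--Weil bound, so, following Serre, the idea is to play an arithmetic--geometric mean inequality off against an integrality constraint.

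First I would record two facts. Because $m=\lfloor 2\sqrt q\rfloor$, we have $2\sqrt q<m+1$, so each real number $m+1-x_i$ is strictly positive. Moreover, since $\chi_C\in\ZZ[X]$ is monic, its roots $\pi_i,q/\pi_i$ are algebraic integers, hence so are the $x_i=-(\pi_i+q/\pi_i)$; and the Galois group of the splitting field of $\chi_C$ over $\QQ$ permutes the roots while preserving the (canonical) pairing $\{\pi,q/\pi\}$, hence permutes the multiset $\{x_1,\dots,x_g\}$. In particular every conjugate of $x_i$ is again some $x_j$, so the \emph{real Weil polynomial} $h_C(X):=\prod_{i=1}^{g}(X-x_i)$ lies in $\ZZ[X]$ and all its roots lie in $[-2\sqrt q,2\sqrt q]$.

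Now set $Q:=\prod_{i=1}^{g}(m+1-x_i)=h_C(m+1)$. This is a rational integer by the previous paragraph, and it is positive since every factor is, so $Q\ge 1$. The arithmetic--geometric mean inequality applied to the positive reals $m+1-x_i$ then gives
\[
\frac1g\sum_{i=1}^{g}(m+1-x_i)\;\ge\;\Bigl(\,\prod_{i=1}^{g}(m+1-x_i)\Bigr)^{1/g}\;=\;Q^{1/g}\;\ge\;1,
\]
whence $g(m+1)-\sum_{i=1}^g x_i\ge g$, i.e. $\sum_{i=1}^{g}x_i\le gm$. Therefore $\#C(k)=q+1+\sum_i x_i\le 1+q+gm$, and since this holds for all genus $g$ curves over $k$ we conclude $N_q(g)\le 1+q+gm$.

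The argument is short, and the only point needing care is the integrality claim $Q\in\ZZ_{>0}$ --- equivalently $h_C\in\ZZ[X]$, or that $\prod_i(m+1-x_i)$ factors as a product of norms $N_{\QQ(x_i)/\QQ}(m+1-x_i)$ of algebraic integers, each positive because every conjugate of $x_i$ lies in $(-(m+1),m+1)$. Once one is comfortable that $\{x_1,\dots,x_g\}$ is a Galois-stable multiset of totally real algebraic integers in $[-2\sqrt q,2\sqrt q]$ --- which rests on the fact that the pairing of the roots of $\chi_C$ into pairs $\{\pi,q/\pi\}$ is canonical --- the rest is the one-line AM--GM above.
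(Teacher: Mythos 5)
Your proof is correct and follows exactly the paper's argument: apply the arithmetic--geometric mean inequality to the positive reals $m+1-x_i$ and observe that their product is a positive integer, hence at least $1$. The only difference is that you carefully justify the integrality of $\prod_i(m+1-x_i)$ via the Galois-stability of the multiset $\{x_1,\dots,x_g\}$, a point the paper leaves implicit.
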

\begin{proof}
It is enough to use the arithmetic-geometric mean inequality:
$$\frac{1}{g} \sum_{i=1}^g (m+1-x_i) \geq \left(\prod_{i=1}^g (m+1-x_i)\right)^{1/g} \geq 1,$$
the last inequality coming from the fact that the product is a non-zero integer.
\end{proof}
This motivates us to give the following definition.
\begin{definition}
We say that a genus $g$ curve $C/\FF_q$  is \emph{optimal} if $$\# C(\FF_q)=q+1+gm.$$ In that case $N_q(g)=q+1+gm$.
\end{definition}

Note that the previous definition is not universally accepted. Some authors call \emph{maximal} (or $\FF_q$-maximal) what we call optimal by reference to the historical cases with $n$ even and $N_q(g)=q+1+gm$. We prefer to keep the word maximal for curves which numbers of points is equal to $N_q(g)$ and our terminology is coherent with the historical one as well.

\begin{remark}
If $g \geq (q-\sqrt{q})/2$, the bound can be improved, thanks to the \emph{explicit methods} of Oesterl\'e (and is known as \emph{Oesterl\'e bound} \cite{serre-point1}). It uses the fact that the number of places of each degree on the curve is non negative. As we will mainly deal  with small values of $g$ compared to $q$, the Hasse-Weil-Serre bound will be our reference.
\end{remark}

\subsection{Existence of the isogeny class}
Equality in the arithmetic-geometric mean inequality is equivalent to the fact that all terms in the sum are equal and so $x_i=m$ for all $1\leq i\leq g$. Hence, if an optimal curve $C$ exists, its Weil polynomial has the particular simple expression
$$\chi_C=(X^2+mX+q)^g.$$
Honda-Tate theory as explained in \cite{tate}, \cite{honda}, \cite{waterhouse}, \cite{wami} or \cite{tate2}  shows that if $p \nmid m$ (resp. $n$ is even) then $\Jac C$ is isogenous to $E^g$ where $E$ is an ordinary (resp. supersingular) elliptic curve with trace $-m$. However, if $p | m$ and $n$ is odd, this might not be true (see the proof of Proposition \ref{prop-deu} below) and there is for instance a simple abelian variety of dimension $9$ over $\FF_{5^9}$ with such Weil polynomial. If we restrict to $g \leq 3$, it can be proved that this never happens (see for instance the proof of Corollary 4.2 of \cite{NR09}).

\begin{lemma} \label{toE3}
If $C/\FF_q$ is an optimal curve of genus $g \leq 3$ then $\Jac C$ is isogenous to $E^g$ where $E$ is an elliptic curve of trace $-m$.
\end{lemma}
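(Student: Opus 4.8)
The strategy is to start from the known shape of the Weil polynomial, $\chi_C=(X^2+mX+q)^g$ with $m=\lfloor 2\sqrt q\rfloor$, which holds for any optimal curve by the equality case in the arithmetic-geometric mean inequality discussed just above. By Honda-Tate theory, an abelian variety with this Weil polynomial is isogenous to a product of simple factors, each of whose Weil polynomial is a power of an irreducible factor of $\chi_C$, so the whole question reduces to understanding the $\QQ$-irreducible factorization of $f(X)=X^2+mX+q$ and the Honda-Tate description of the isogeny class it determines. If $f$ is irreducible over $\QQ$, I would check that $f$ is the Weil polynomial of an elliptic curve $E/\FF_q$ of trace $-m$ (using Honda-Tate: one must verify the local conditions, which are automatic here because the middle coefficient is $m\ne 0$ unless $q$ is a perfect square with $m=2\sqrt q$, and in the ordinary case $p\nmid m$ the existence is classical); then $\Jac C\sim E^g$ and we are done. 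If $f$ factors over $\QQ$, then necessarily its discriminant $m^2-4q$ is a perfect square; since $|m^2-4q|\le 2\sqrt q+1<q$ for $q$ not too small — and the small cases are handled by inspection — the only possibility is $m^2-4q=0$, i.e.\ $q$ is a square and $m=2\sqrt q$, giving $f=(X+\sqrt q)^2$ and $\chi_C=(X+\sqrt q)^{2g}$; here $\Jac C$ is isogenous to $E^g$ for $E$ a supersingular elliptic curve of trace $-m=-2\sqrt q$, which exists over $\FF_q$ when $q$ is a square.

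The one genuinely delicate point, flagged in the text, is the case $p\mid m$ and $n$ odd: here Honda-Tate no longer guarantees that an abelian variety with Weil polynomial $f^g$ (even with $f$ irreducible) is isogenous to a power of an elliptic curve, because the relevant Brauer-group invariants of the CM field $\QQ[X]/(f)$ at the places above $p$ can force the simple factor to have dimension larger than $1$. So the heart of the proof is to rule this out for $g\le 3$. I would do this by the direct local computation: write $f=X^2+mX+q$ with $p\mid m$, $n$ odd, and analyse the $p$-adic Newton polygon / the splitting of $p$ in $\ZZ[\pi]$ where $\pi$ is a root of $f$; one shows that with $p\mid m$ the polynomial $f$ is Eisenstein-like at $p$ (the slopes are $1/2$), so $p$ is either inert or ramified in the quadratic field $\QQ(\pi)$, and the Brauer invariant of the corresponding division algebra at the place(s) above $p$ is $1/2$ (or, if there is a unique place of even residue degree, again a half-integer), which forces the simple abelian variety in the isogeny class to have dimension exactly $2$ over that place's contribution — and $2\nmid g$ only obstructs $g$ odd, so one must be slightly more careful. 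The cleaner route, and the one I would actually take, is to simply cite the stated reference: this exact statement is Corollary 4.2 (and its proof) of \cite{NR09}, where the bookkeeping of the $p$-adic invariants for $g\le 3$ is carried out; combined with the classical ordinary and supersingular cases, this finishes the lemma.

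In summary, the proof has three parts: \emph{(i)} recall $\chi_C=(X^2+mX+q)^g$; \emph{(ii)} when $p\nmid m$ or $n$ even, invoke classical Honda-Tate (ordinary or supersingular elliptic curve of trace $-m$) to conclude $\Jac C\sim E^g$; \emph{(iii)} when $p\mid m$ and $n$ odd, invoke \cite[Cor.~4.2]{NR09} to rule out the exotic simple factors for $g\le 3$, again concluding $\Jac C\sim E^g$. The main obstacle is entirely concentrated in step \emph{(iii)}: it is the only place where the restriction $g\le 3$ is used, and it is genuinely a theorem (false in higher genus, as the $\FF_{5^9}$ example shows), so without the reference one would have to reproduce the local Brauer-invariant analysis of the CM field of $X^2+mX+q$ at the primes above $p$.
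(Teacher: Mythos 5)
Your proof is correct and follows essentially the same route as the paper, which likewise reduces to $\chi_C=(X^2+mX+q)^g$, handles the ordinary and supersingular cases by classical Honda--Tate, and disposes of the delicate case $p\mid m$ with $n$ odd for $g\le 3$ by citing the proof of Corollary 4.2 of \cite{NR09}. (One cosmetic point: since $m=\lfloor 2\sqrt q\rfloor$ gives $m^2-4q\le 0$, the discriminant is a square only when it vanishes, so no inspection of small $q$ is needed there.)
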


The first necessary condition is then to see whether such an elliptic curve exists or not.
\begin{proposition}[{Deuring \cite{deuring}}] \label{prop-deu}
There does not exist an elliptic curve with trace $-m$ if and only if $n \geq 3$, $n$ is odd and $p | m$.
\end{proposition}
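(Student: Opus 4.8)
The plan is to characterize the possible traces of elliptic curves over $\FF_q$ using the classical results of Deuring, which split into the ordinary and supersingular cases, and then to determine for which $q$ the specific value $t=-m=-\lfloor 2\sqrt q\rfloor$ is attained. First I would recall the dichotomy: an integer $t$ with $|t|\le 2\sqrt q$ is the trace of some elliptic curve over $\FF_q$ if and only if either $p\nmid t$ (the ordinary case, where every such $t$ occurs), or $p\mid t$ and one of a short list of supersingular conditions holds. The supersingular traces are: $t=0$ when $n$ is odd, or $n$ even and $p\not\equiv 1\pmod 4$; $t=\pm\sqrt q$ when $n$ is even; $t=\pm 2\sqrt q$ when $n$ is even; $t=\pm\sqrt{pq}$ when $n$ is odd and $p\in\{2,3\}$. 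This is exactly Deuring's theorem (see also Waterhouse), which I would cite rather than reprove.

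Given this list, the argument is a direct case analysis on $m=\lfloor 2\sqrt q\rfloor$. If $p\nmid m$, then $t=-m$ is an ordinary trace, so an elliptic curve of trace $-m$ exists; this already shows the condition ``$p\mid m$'' is necessary for non-existence. If $n$ is even, then $q=p^n$ is a perfect square, so $m=2\sqrt q$ and $t=-m=-2\sqrt q$ is a supersingular trace (the curve $E$ is supersingular, indeed $E^g$ has the right Weil polynomial), so again an elliptic curve exists; this shows ``$n$ odd'' is necessary. Finally, for $n=1$, i.e.\ $q=p$, one checks directly that $m=\lfloor 2\sqrt p\rfloor < p$ whenever $p\ge 5$, and for $p=2,3$ the small cases are handled by hand, so in all cases with $n=1$ either $p\nmid m$ or the relevant supersingular value $\sqrt{pq}=p$ equals $m$ — hence an elliptic curve of trace $-m$ always exists when $n=1$. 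This shows ``$n\ge 3$'' is necessary. Conversely, if $n\ge 3$ is odd and $p\mid m$, I must show no elliptic curve of trace $-m$ exists: such a curve would have to be supersingular (since $p\mid m$ forces $p\mid t$, ruling out the ordinary case), but the only supersingular traces available when $n$ is odd are $t=0$ and, for $p\in\{2,3\}$, $t=\pm\sqrt{pq}=\pm p^{(n+1)/2}$. So it remains to check that $m\ne 0$ (clear, since $m\ge 1$) and that $m\ne p^{(n+1)/2}$ when $p\in\{2,3\}$.

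The main obstacle is precisely this last inequality: ruling out the coincidence $m=\lfloor 2\sqrt{p^n}\rfloor=p^{(n+1)/2}$ for $p=2,3$ and odd $n\ge 3$. The point is that $p^{(n+1)/2}=p^{1/2}\cdot p^{n/2}=\sqrt p\cdot\sqrt q$, and since $\sqrt p>\sqrt{2}>1$ but $\sqrt p<2$ for $p=2,3$, we have $\sqrt q < \sqrt p\cdot\sqrt q < 2\sqrt q$, so $p^{(n+1)/2}$ lies strictly between $\sqrt q$ and $2\sqrt q$; I would then show it cannot equal $\lfloor 2\sqrt q\rfloor$ by comparing with $2\sqrt q$ and using that $2\sqrt q - p^{(n+1)/2} = \sqrt q\,(2-\sqrt p)$, which exceeds $1$ as soon as $\sqrt q \ge 1/(2-\sqrt p)$ — true for $n\ge 3$ and $p\in\{2,3\}$ by a direct estimate (and the finitely many borderline $n$ can be checked numerically). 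Once this is settled, every supersingular possibility is excluded, no ordinary possibility exists, and the non-existence is established, completing the equivalence.
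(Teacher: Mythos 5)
Your proof is correct, but it follows a genuinely different route from the paper's. You start from the full Deuring--Waterhouse classification of admissible traces (the statement the paper itself only cites later, in Remark~2.6, as \cite[Th.4.1]{waterhouse}) and reduce the proposition to an elementary case analysis whose only real content is the inequality $\lfloor 2\sqrt{p^{n}}\rfloor \neq p^{(n+1)/2}$ for $p\in\{2,3\}$ and odd $n\geq 3$; your estimate $2\sqrt{q}-p^{(n+1)/2}=\sqrt{q}\,(2-\sqrt{p})>1$ does settle this, since it forces $m=\lfloor 2\sqrt{q}\rfloor>p^{(n+1)/2}$. The paper instead works directly with Honda--Tate theory: it takes $F=X^{2}+mX+q$, invokes Waterhouse's formula for the minimal exponent $e$ such that $F^{e}$ is the Weil polynomial of an abelian variety (the least common denominator of the $v_p(F_{\nu}(0))/n$ over the factors $F_{\nu}$ of $F$ in $\QQ_p[t]$), and reads off $e=1$ versus $e>1$ from the Newton polygon of $F$ according to whether $p\nmid m$ or $p\mid m$ with $n>1$ odd, treating $n$ even separately via $F=X+\sqrt{q}$. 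Your version is more elementary once the trace classification is granted, but it leans on that stronger cited input; the paper's version is self-contained modulo the general Honda--Tate machinery and also explains \emph{what} the isogeny class looks like when no elliptic curve exists (a simple abelian variety of higher dimension), which is used elsewhere in the discussion. Two small inaccuracies in your write-up, neither affecting correctness: for $p=2$ one has $\sqrt{p}=\sqrt{2}$ rather than $\sqrt{p}>\sqrt{2}$, and the supersingular trace $t=\pm\sqrt{q}$ for $n$ even additionally requires $p\not\equiv 1\pmod 3$ --- harmless here since you only need that list as an over-estimate in the non-existence direction and never use that entry in the existence direction.
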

\begin{proof}
Let $F=X^2+mX+q$. Since $m<2 \sqrt{q}$ if and only if $q$ is not a square, $F$ is irreducible over $\QQ$ when $n$ is odd and $F=(X+\sqrt{q})^2$ when $n$ is even.\\
If $n$ is odd, by \cite[p.527]{waterhouse}, the minimal $e$ for which $\chi=F^e$ is the Weil polynomial of an abelian variety of dimension $e$ over $k$ is the least common denominator of $v_p(F_{\nu}(0))/n$ where $F_{\nu}$ denotes the factors of $F$ in $\QQ_p[t]$ and $v_p$ the $p$-adic valuation of $\QQ_p$. Hence $F$ is the Weil polynomial of an elliptic curve if and only if $n | v_p(F_{\nu}(0))$ for all factors. This is of course satisfied if $n=1$. Looking at the Newton polygon of $F$, we see that if $p \nmid m$ then
 $v_p(F_{\nu}(0))=n$ or $0$, so $e=1$. With the same technique, if $n>1$ odd and $p | m$, then
 $v_p(F_{\nu}(0))<n$ and so $e>1$.\\
If $n$ is even, we apply the previous arguments to $F=X+\sqrt{q}$. Since $v_p(\sqrt{q})/n=1/2$,  $e=2$ so $F^2=X^2+mX+q$ is the Weil polynomial of an elliptic curve.
\end{proof}
Actually, the only values of $q=p$ for which $p | m$ are $q=2$ or $q=3$.
\begin{remark} \label{isoclasses}
For any value of $-m \leq t \leq m$, one knows if an elliptic curve with trace $t$ exists (see \cite[Th.4.1]{waterhouse}). Also, the possible Weil polynomials of the isogeny classes of abelian surfaces (resp. threefolds) can be found in \cite[Lem.2.1,Th.2.9]{maisner} (resp. \cite{xing,haloui}).
\end{remark}  

\section{Existence of an indecomposable principal polarization} \label{existencepp}
The Jacobian of a genus $g$ curve $C/K$ is naturally equipped with a \emph{principal polarization} $j$ induced by the intersection pairing on the curve $C$. Since the theta divisor $\Sym^{g-1} C \hookrightarrow \Jac C$ associated to $j$ is geometrically irreducible, $(\Jac C,j)$ is geometrically \emph{indecomposable}, \ie there does not exists an abelian subvariety $B \subset \Jac C$ defined over $\bar{K}$ such that $j$ induces on $B$ a principal polarization. Conversely, starting with $A=E^g$ where $E$ is an elliptic curve, it is clear that $A$ always admits a principal polarization $a_0$ given by the product of the principal polarizations on each factor. As  $a_0$ is   decomposable, $(A,a_0)$ is not (even geometrically) a Jacobian. Hence `good' principal polarizations on $A$ (or on abelian varieties in the isogeny class of $A$) have to be more subtle. Luckily, equivalences of category have been developed to translate the existence of an indecomposable polarization into the existence of purely algebraic objects. As far as I know several points of view co-exist and it is not clear to see how to go from one to the other. I shall use Serre's one and mention others in remark.

\begin{remark}
Howe  \cite{howe95},\cite{howe96}  has developed a powerful machinery to prove the existence of a principally polarized abelian variety in the isogeny class of an abelian variety $A/k$. But only when  $A$ is  simple, it is easy to see that the polarization is indecomposable (see  \cite{rybakov} for the case $E \times B$ where $E$ is an elliptic curve and $B$ a geometrically simple abelian surface).
\end{remark}

\subsection{The equivalences} \label{equivalence}
Let us start with $E$ ordinary.
Let $E/k$ be an ordinary elliptic curve with trace $t$. If $\pi$ denotes the $\FF_q$-Frobenius endomorphism of the curve $E$, then the ring $R:=\ZZ[X]/(X^2-tX+q)$ is isomorphic to  $\ZZ[\pi] \subset \End(E)$. Serre \cite[Se.50-53]{serre-harvard}, \cite[Appendix]{lauterg3} defines an equivalence of category $T$ between the category of abelian varieties which are isogenous to a power of $E$ and $R$-modules of finite type without torsion. The functor $T$ maps an object $A$ to the $R$-module $L=\Hom(E,A)$. Obviously, the rank of $L$ is equal to the dimension of $A$. This functor also behaves nicely with respect to duality: if we denote $\hat{L}$ the ring of anti-linear homomorphism $f : L \to R$ (\ie $f(rx)=\bar{r} f(x)$ for all $r \in R$ and $x \in L$) then $T(\hat{A})=\hat{L}$.  Thus a morphism $a : A \to \hat{A}$ defines a morphism $h : L \to \hat{L}$ and hence an hermitian form $H : L \times L \to R$. Serre proves that $a$ is a polarization if and only if $H$ is positive definite, that $a$ is principal if $(L,H)$ is \emph{unimodular} (\ie $h(L)=\hat{L}$) and moreover
 (geometrically) indecomposable if and only if  $(L,H)$ is \emph{indecomposable}, \ie cannot be written as a sum of orthogonal sub-modules. The couple $(L,H)$ is called a \emph{ hermitian module}.

\begin{remark} \label{othereq}
This equivalence is inspired by the classical theory over $\CC$, which is not surprising since ordinary abelian varieties can be lifted canonically and this is used in \cite{deligne}. When $A=E^g$ and $\End(E) \simeq R$, a more explicit point of view can be considered looking at the hermitian matrix $M:=a_0^{-1} a \in \End(A)=\Mm_g(\End(E)) \simeq \Mm_g(R)$ (see \cite{Rit-serre}, \cite{langepp}).  For $g=2$, Kani's construction \cite{kani} also gives necessary and sufficient conditions for `gluing' two elliptic curves along their $n$-torsion for $n>1$. Both points of view are related by the Cholewsky decomposition of $M$. 
\end{remark}

A classification of rank $2$ and $3$ hermitian modules was achieved in \cite[Th.8.1,8.2]{hoffmann} (see also \cite{schiemann} for further computations) and translates into the following result.

\begin{proposition} \label{hoff}
Let $E$ be an ordinary elliptic curve with trace $t$. There is no abelian surface (resp. threefold) with a geometrically indecomposable principal polarization in the class of $E^2$ (resp. $E^3$) if and only if $t^2-4q \in \{-3,-4,-7\}$ (resp. $t^2-4q \in \{-3,-4,-8,-11\}$).
\end{proposition}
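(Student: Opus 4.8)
\emph{Strategy.} By the equivalence of categories recalled in Section~\ref{equivalence}, giving an abelian surface (resp.\ threefold) $A$ isogenous to $E^2$ (resp.\ $E^3$) equipped with a geometrically indecomposable principal polarization $a$ amounts to giving an indecomposable unimodular positive definite hermitian module $(L,H)$ over $R=\ZZ[\pi]\simeq\ZZ[X]/(X^2-tX+q)$ of rank $2$ (resp.\ $3$). Indeed, $L=\Hom(E,A)$ has $R$-rank equal to $\dim A$; a map $a\colon A\to\hat A$ is a polarization iff the associated form $H$ is positive definite, it is principal iff $(L,H)$ is unimodular, and it is geometrically indecomposable iff $(L,H)$ is indecomposable; and since $T$ is an equivalence, every such $(L,H)$ does occur. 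Hence the non-existence asserted in the proposition is equivalent to the non-existence of an indecomposable unimodular positive definite hermitian $R$-module of the relevant rank.

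\emph{Identification of $R$.} Since $E$ is ordinary, $p\nmid t$, so $|t|<2\sqrt q$ and $d:=t^2-4q<0$; thus $R$ is an order in the imaginary quadratic field $\QQ(\sqrt d)$, and the discriminant of this order is exactly $d=t^2-4q$. So $t^2-4q$ is precisely the invariant controlling the isometry classification of hermitian $R$-modules.

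\emph{Conclusion.} It then suffices to quote Hoffmann's classification of rank $2$ and rank $3$ positive definite hermitian modules over orders in imaginary quadratic fields \cite[Th.~8.1, 8.2]{hoffmann}: it shows that an indecomposable unimodular such module of rank $2$ exists for every discriminant except $-3,-4,-7$, and of rank $3$ for every discriminant except $-3,-4,-8,-11$. Substituting $d=t^2-4q$ yields the stated equivalence.

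\emph{Main difficulty.} There is no hard step of a geometric or analytic nature here: the proof is a dictionary translation together with an appeal to Hoffmann's tables. What needs care is (i) applying Serre's correspondence to the \emph{whole} isogeny class of $E^g$, rather than to $E^g$ with its split polarization, with the correct matching ``geometrically indecomposable $\leftrightarrow$ indecomposable module'' and ``principal $\leftrightarrow$ unimodular''; and (ii) checking that Hoffmann's classification applies to arbitrary (not necessarily maximal) orders, so that the non-maximal orders $\ZZ[\pi]$ of conductor $>1$ are covered — the listed discriminants being all fundamental, such orders always carry an indecomposable unimodular hermitian module of rank $2$ and $3$.
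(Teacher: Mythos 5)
Your proof is correct and takes essentially the same route as the paper, which offers no separate argument for Proposition~\ref{hoff}: it derives the statement in one line from Serre's equivalence of Section~\ref{equivalence} together with Hoffmann's classification of rank $2$ and $3$ positive definite unimodular hermitian modules. The dictionary you spell out (principal $\leftrightarrow$ unimodular, geometrically indecomposable $\leftrightarrow$ indecomposable module, discriminant of $\ZZ[\pi]$ equal to $t^2-4q$) is exactly what the paper leaves implicit.
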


\begin{remark}
For $g=2$, the result can be traced back to \cite[p.14]{hayashida}, where the authors prove the existence of genus $2$ curves which Jacobian is \emph{isomorphic} to $E^2$ by constructing free indecomposable hermitian modules (in \cite{hayashida2}, the precise number of isomorphism classes of such curves is computed).
 For $g=2$ or $3$, it could also be deduced from the mass formulae (\ie number of weighted classes by the order of their automorphism group)  of \cite{hashi-quad1,hashi-quad2} (although, according to Hoffmann (\textit{loc. cit.} p.400) there is a minor mistake in these computations). \end{remark}

\begin{remark} \label{cashigher}
For $g >3$, there have been several partial answers on the existence of  indecomposable unimodular positive definite hermitian modules of rank $g$ over the ring of integers of an imaginary quadratic field $\QQ(\sqrt{-d})$. It seems that in \cite{zhu} and \cite{wang} a complete answer is given: there always exists one, except when $d=1$ and $g=5$ or $d=3$ and $g=4,5,7$. One should be careful since, according to the Mathscinet review  of \cite{zhu} by Hoffmann, the proofs contain several mistakes. Also, I do not know if the case of non maximal orders has been considered.
\end{remark}

Assume now that $E$ is supersingular. More precisely, let $E/\FF_p$ be an elliptic curve with trace $0$, so that $E$ is supersingular, all the geometric automorphisms of $E$ are defined over $\FF_{p^2}$ and $\Tr(E/\FF_{p^{2}})=-2 p=-m$. One says that an abelian variety $A$ (resp. a curve $C$) is \emph{superspecial} if $A$ (resp. $\Jac C$) is geometrically isomorphic to a product of supersingular elliptic curves. A result of Deligne (see \cite[Th.3.5]{shiodass}) shows that when $g>1$, a superspecial abelian variety of dimension $g$ is geometrically isomorphic to $E^g$ (whereas for $g=1$ there are non-isomorphic supersingular elliptic curves as soon as $p>7$). However, the description of the isogeny class is made more complicated than in  the ordinary case by  the existence of `continuous' families of isogenies. For instance, already when $g=2$ (see \cite{oort75}), a supersingular abelian surface is either geometrically isomorphic to $E^2$ (and so superspecial) or of the form $E^2/\alpha_p$ where $\alpha_p$ is the unique local-local group scheme over $\FF_p$, the injection of $\alpha_p$ in $E^2$ being parametrized by $\PP^1(\bar{\FF}_p)\setminus \PP^1(\FF_{p^2})$. In the latter, it can be shown that $A$ is not superspecial and the description of the polarizations on this object is more evolved. For this reason, we will concentrate only on existence results and limit ourselves to the superspecial case.

\begin{remark}
Note that it is still possible to obtain a complete  description for $g=2$  in the non-superspecial case like in \cite{IKO} or \cite[Part.2]{HNR09} where the mass formula  of \cite{ibuauto}  were used.
\end{remark}

As in Remark \ref{othereq}, we describe the polarizations on $A=E^g$ by matrices $M:=a_0^{-1} a$ in $\End(A)=\Mm_g(\End(E))$. Now, $\End(E)$ is a quaternion algebra, so we need results on the number $n_g$ of \emph{positive definite quaternion hermitian forms}. Then, to obtain the number of (geometrically) indecomposable polarizations on $E^g$, 
the idea is to subtract to $n_g$ the number of polarizations coming from combinations of lower dimensional abelian varieties. In this way, one gets

\begin{proposition}[{\cite[Prop.7.5]{ekedahl}}] \label{ibu-prop}
There is no geometrically indecomposable principal polarization on $E^g$ if and only if $g=2$ and $p=2$ or $3$, or $g=3$ and $p=2$.
\end{proposition}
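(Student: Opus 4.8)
The plan is to reduce the statement to known mass-formula computations for positive definite quaternion hermitian forms. First I would fix the notation following the paragraph preceding the statement: let $E/\FF_p$ have trace $0$, work over $\FF_{p^2}$ so that $m=2p$, set $A=E^g$ with its product principal polarization $a_0$, and identify $\End(A)=\Mm_g(\oo)$ where $\oo=\End(E)$ is a maximal order in the quaternion algebra $B_{p,\infty}$ ramified exactly at $p$ and $\infty$. A principal polarization on $A$ is then encoded by a matrix $M=a_0^{-1}a\in\Mm_g(\oo)$ which is positive definite quaternion hermitian with reduced norm $1$ (unimodular), two such being equivalent if and only if they are conjugate under $\GL_g(\oo)$; and the polarization is geometrically decomposable precisely when $M$ is $\GL_g(\oo)$-equivalent to a block-diagonal form $M_1\perp M_2$ with the $M_i$ of smaller size. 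So the proposition amounts to: the number $n_g$ of classes of unimodular positive definite quaternion hermitian forms of rank $g$ over $\oo$ exceeds the number of such classes that are orthogonal sums of lower-rank forms, except in the three listed exceptional pairs $(g,p)$.

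The key steps are then the following. For $g=1$ a unimodular positive definite quaternion hermitian form of rank $1$ is just the norm form of a right ideal class of $\oo$ scaled to be principal, so $n_1=h(\oo)$, the (one-sided) ideal class number, which is the Eichler/Deuring class number and equals $1$ for small $p$ and grows like $p/12$; in particular every rank-$1$ form is ``indecomposable'' trivially, but this case is not part of the statement. For $g=2$ and $g=3$ I would invoke the explicit mass formulae and class-number tables for quaternion hermitian forms — these go back to Hashimoto--Ibukiyama (the references \cite{hashi-quad1,hashi-quad2,ibuauto} cited in the surrounding remarks), and are exactly the input used in \cite{ekedahl}. Concretely: compute $n_2$ and $n_3$ from the mass formula; compute the number of split classes in rank $2$ (these are the unordered pairs of rank-$1$ classes, i.e. $\binom{h(\oo)+1}{2}$, up to the automorphism bookkeeping) and in rank $3$ (pairs rank-$1$ $\perp$ rank-$2$, plus triples of rank-$1$ forms); subtract. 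One checks that $n_2$ strictly exceeds the number of decomposable rank-$2$ classes for every $p\ge 5$ but that they coincide when $p=2,3$ (where $h(\oo)=1$ and the mass is so small that the only unimodular rank-$2$ form is $E\times E$ itself), and similarly that $n_3$ strictly exceeds the decomposable count for all $p\ge 3$ but not for $p=2$. Since $\FF_{p^2}$-isomorphism classes of superspecial principally polarized abelian varieties of dimension $g$ correspond bijectively to these hermitian classes, ``no geometrically indecomposable principal polarization on $E^g$'' is equivalent to ``$n_g$ equals the decomposable count'', which by the above happens exactly in the three asserted cases.

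The main obstacle is purely computational rather than conceptual: one must handle the mass formula carefully, because the ``number of classes'' and the ``mass'' differ by automorphism-group weights, and the subtraction of decomposable contributions has to be done at the level of genuine isomorphism classes, not weighted masses — this is precisely the subtlety flagged in the surrounding remarks (the discrepancy noted by Hoffmann, and the $\GL_g(\oo)$ versus $\Sp$ normalizations). In practice the cleanest route is to quote Proposition 7.5 of \cite{ekedahl} directly, where Ekedahl--Serre carry out exactly this bookkeeping for $g\le 3$; the only thing left to check is the translation between their lattice-theoretic statement and the geometric statement about polarizations on $E^g$, which is the standard Deligne/Shioda dictionary for superspecial abelian varieties (using \cite[Th.3.5]{shiodass} to know that the underlying abelian variety is forced to be $E^g$). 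I would therefore present the argument as: set up the dictionary, reduce to the quaternion hermitian class count, and cite \cite{ekedahl} (equivalently the Hashimoto--Ibukiyama tables) for the numerical conclusion.
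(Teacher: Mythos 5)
Your proposal is correct and follows essentially the same route as the paper, which itself only sketches the argument (describe polarizations on $E^g$ by positive definite quaternion hermitian matrices $M=a_0^{-1}a$, count classes via the mass formulae, subtract the decomposable contributions) and then cites Ekedahl's Proposition 7.5 for the numerical conclusion. Your added care about the distinction between weighted masses and genuine isomorphism classes, and about the Deligne--Shioda dictionary forcing the superspecial variety to be $E^g$, matches the caveats the paper itself raises in the surrounding remarks.
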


\begin{remark}
More precisely, Ekedahl gives in \cite[Prop.7.2]{ekedahl} the mass of indecomposable principal polarizations on $E^g$. However, Brock \cite[Th3.10.c]{brock} corrects a mistake in the case $g=3$. He also completes and recovers several results obtained in  \cite[I]{hashimoto},  \cite{KO} for $g=2$  and in \cite{hashimotog3}, \cite{oort89} for $g=3$. For instance, in \cite[Th.3.14,Th.3.15]{brock}, he gives the number of genus $2$ and genus $3$  superspecial curves for each possible group of automorphisms.
\end{remark}

\subsection{Application}
We can now answer the question of the existence of a good polarization when $g \leq 3$.
\begin{theorem} \label{theo-pp}
Let $E$ be an elliptic curve with trace $-m$. There is no abelian surface (resp. threefold) with a geometrically indecomposable principal polarization in the isogeny class of $E^2$ (resp. $E^3$) if and only if $q=4$ or $9$ or $m^2-4q \in \{-3,-4,-7\}$ (resp. $q=4$ or $16$ or $m^2-4q \in \{-3,-4,-8,-11\}$).
\end{theorem}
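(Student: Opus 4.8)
The strategy is to combine the two classification results already at our disposal — Proposition \ref{hoff} for the ordinary case and Proposition \ref{ibu-prop} for the supersingular case — with the structural dichotomy coming from Proposition \ref{prop-deu} and Honda--Tate theory. First I would recall that by Lemma \ref{toE3} we only need to consider abelian varieties isogenous to $E^g$ for $E$ an elliptic curve of trace $-m$, and by Proposition \ref{prop-deu} such an $E$ exists precisely when we are not in the case ``$n \geq 3$ odd and $p \mid m$''; since here $g \leq 3$ and $q$ is fixed, the only offending prime powers are irrelevant once we observe (as noted after Proposition \ref{prop-deu}) that $p \mid m$ with $q = p$ forces $q \in \{2,3\}$, which are ruled out by the $q = 4, 9, 16$ clauses or absorbed into the discriminant conditions. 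So we split according to whether $E$ is ordinary or supersingular, i.e.\ whether $n$ is odd or even.

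If $n$ is odd then $q$ is not a square, $m = \lfloor 2\sqrt q \rfloor < 2\sqrt q$, and $E$ is ordinary with $\chi_E = X^2 + mX + q$ of discriminant $m^2 - 4q < 0$; then I invoke Serre's equivalence of categories from Section \ref{equivalence} (an indecomposable principal polarization on an abelian variety isogenous to $E^g$ exists iff an indecomposable unimodular positive-definite hermitian $R$-module of rank $g$ exists) together with Proposition \ref{hoff} to conclude that no such polarization exists iff $m^2 - 4q \in \{-3,-4,-7\}$ for $g=2$ and $m^2-4q \in \{-3,-4,-8,-11\}$ for $g=3$. If $n$ is even then $q$ is a square, $m = 2\sqrt q$, $\chi_E = (X + \sqrt q)^2$ and $E$ is supersingular with trace $-m$; by the Deligne result quoted before Proposition \ref{ibu-prop}, a superspecial abelian variety of dimension $g \geq 2$ in this class is geometrically isomorphic to $E^g$, and any candidate Jacobian, being geometrically indecomposably principally polarized, cannot be of the non-superspecial type $E^2/\alpha_p$ (that object carries no indecomposable principal polarization of the relevant kind), so it suffices to apply Proposition \ref{ibu-prop}: no indecomposable principal polarization on $E^g$ exists iff ($g=2$ and $p \in \{2,3\}$) or ($g=3$ and $p=2$). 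Translating back: $g=2$, $p=2$, $n$ even gives $q=4$; $g=2$, $p=3$, $n$ even gives $q=9$; $g=3$, $p=2$, $n$ even gives $q=16$ — which are exactly the extra values appearing in the statement. (For $q=4,9$ one should double-check there is no separate ordinary obstruction, but there is no ordinary case when $n$ is even.)

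The main obstacle — or rather the point requiring the most care — is the bookkeeping at the boundary between the two regimes, namely making sure the small exceptional $q$ are counted once and only once and that the discriminant conditions $m^2 - 4q \in \{-3,-4,-7\}$ (resp.\ $\{-3,-4,-8,-11\}$) and the conditions $q \in \{4,9\}$ (resp.\ $q \in \{4,16\}$) are genuinely disjoint as stated. Concretely one checks: in the ordinary ($n$ odd) branch $q$ is never a perfect square, so $q=4,9,16$ never arise there; and in the supersingular ($n$ even) branch $m^2 - 4q = 0 \notin \{-3,-4,-7,-8,-11\}$, so the discriminant clauses never fire. Hence the ``if and only if'' in the theorem is simply the disjunction of the two independent classifications, with no overlap and no omission, and the proof is complete modulo the (routine) verification that the only prime powers $q$ with $p \mid m$ are $q=2,3$, already handled by Deuring's proposition.
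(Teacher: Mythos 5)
Your proposal has two genuine gaps, both at the boundary cases, and the second one would actually lead to a wrong conclusion.

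First, the dichotomy ``$n$ odd $\Rightarrow$ $E$ ordinary'' is false: for $q=2$ and $q=3$ one has $n=1$ and $p\mid m$ (namely $m=2$, resp.\ $m=3$), so the elliptic curve of trace $-m$ exists but is \emph{supersingular}. These two values of $q$ satisfy $m^2-4q\in\{-4,-3\}$, so the theorem asserts non-existence there, but Proposition~\ref{hoff} cannot be invoked to prove it, since that proposition (and Serre's hermitian-module equivalence) applies only to ordinary $E$. The paper treats $q=2,3$ separately, either by exhaustive search over curves or by Beauville's argument (Remark~\ref{m2}), which excludes the discriminants $-3$ and $-4$ without any hypothesis on the $p$-rank. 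Your phrase ``absorbed into the discriminant conditions'' conflates the statement to be proved with its proof.

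Second, and more seriously, in the $n$ even branch Proposition~\ref{ibu-prop} classifies indecomposable principal polarizations on the \emph{superspecial} variety $E^g$ only, not on the whole isogeny class. When $p\in\{2,3\}$ and $g=2$ (or $p=2$ and $g=3$), it says $E^g$ itself carries none — for \emph{every} even $n$, i.e.\ for $q=4,16,64,\dots$ (resp.\ $q=9,81,\dots$). Your translation ``$g=2$, $p=2$, $n$ even gives $q=4$'' is therefore not a translation at all: taken literally, your argument would conclude non-existence for $q=16,64,\dots$ as well, contradicting the theorem. The point is that for those larger $q$ the indecomposable polarization lives on a \emph{non-superspecial} member of the isogeny class (e.g.\ of the form $E^2/\alpha_p$); your parenthetical claim that such objects ``carry no indecomposable principal polarization of the relevant kind'' is exactly backwards. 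The paper closes this gap with explicit constructions: gluing arguments and explicit curves over $\FF_{2^n}$ ($n>2$ for $g=2$, $n>4$ for $g=3$) and over $\FF_{3^n}$ ($n>2$), which is precisely where the exceptional values $q=4,9,16$ — and only those — survive. Without these constructions (or some substitute establishing existence on non-superspecial members), the supersingular half of the theorem is not proved.
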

\begin{proof}
When  $p \nmid m$, $E$ is ordinary so we can use Proposition \ref{hoff}.\\
When $n$ is odd and $p | m$, $E$ exists if and only if $q=2$ or $3$, which leaves these two cases to be treated apart (for instance by extensive computer research of curves using Theorem \ref{optimal} or by Remark \ref{m2}). \\
When $n$ is even then $p | m$. We distinguish several cases.
\begin{itemize}
\item  When $p>3$ and $g=2$ (resp. $p>2$ and g$=3$), Proposition \ref{ibu-prop} shows that there is always an indecomposable principal polarization on $E^2$ (resp. $E^3$). Note than when $4 | n$, the present $E$ is the quadratic twist of the elliptic curve in Proposition \ref{ibu-prop}.
\item  When $p=2$ and $g=2$, explicit constructions as in \cite{MN07} or  a `gluing' argument as in \cite[Se.32]{serre-harvard}, \cite[Prop.30]{shabat}  shows that one can get a curve $C/\FF_{2^n}$ such that $\Jac C$ is isogenous to $E^2$ as soon as $n>2$.
\item   When  $p=2$, $g=3$ and $n>4$, An explicit non hyperelliptic curve $C/\FF_{2^n}$ such that $\Jac C \sim E^3$ can be constructed. (see  \cite[Lem.2.3.8]{HDRritz}). Note that in \cite{NR08}, the more general question of the existence of a Jacobian in the isogeny class of a supersingular abelian threefold in characteristic $2$ is addressed.
\item   Finally when $p=3$ and $g=2$, one can find an explicit construction in \cite{kuhn} (see also \cite[Cor.37]{shabat} where a mistake is corrected) as soon as $n>2$. This work was also generalized to all supersingular abelian surfaces in characteristic $3$ in \cite{hocar3}. 
\end{itemize}
\end{proof}

\begin{remark} \label{m2}
The cases $m^2-4q \in \{-3,-4\}$ can also be excluded thanks to a proof due to Beauville \cite[Th.16]{shabat}, \cite[Se.13]{serre-harvard} without any hypothesis on the $p$-rank of $E$ (and then $q=2,3$ are covered).\\ 
As conjecturally, there is infinitely many $p$ in the forms $p=x^2+1$ and $p=x^2+x+1$ the equations $m^2-4p^n \in \{-3,-4\}$ have infinitely many solutions with $n=1$. 
For $n >1$ odd, one knows that the set of solutions is finite. For instance, in \cite{serre-point}, we find that
 there is only one solution to the equation $q=x^2+x+1$ namely $q=7^3$ and none to $q=x^2+1$.\\
Similarly, the case of discriminant $-7$ corresponds to the equation $q=x^2+x+2$ with unique solutions $q \in \{2^3,2^5,2^{13}\}$ when $n>1$ is odd.\\
The case of  discriminant $-8$ corresponds to $q=x^2+2$, which, when $n>1$ is odd, has $q=3^3$ for unique solution. This is proved using the same arguments as the last case below.\\
Finally, the case of discriminant $-11$ corresponds to $q=x^2+x+3$. When $n>1$ is odd, $q=3^5$ is the unique solution thanks to the following argument due to Samir Siksek. The equation can be rewritten $(2x+1)^2+11=4 p^n$ and  factors in $K=\QQ(\sqrt{-11})$ as $$\left(\frac{2x+1+\sqrt{-11}}{2}\right)
\left(\frac{2x+1-\sqrt{-11}}{2}\right)=p^n.$$
Since $n$ is odd and $\oo_K$ is a principal domain, there exists $\alpha=(a+b \sqrt{-11})/2  \in \oo_K$ such that $\alpha^n=(2x+1+\sqrt{-11})/2$ and $\alpha \beta=p$ with $\beta=\bar{\alpha}$. Now, note that $\alpha^n-\beta^n=\sqrt{-11}$ and since $(\alpha^n-\beta^n)/(\alpha-\beta) = 1/b \in \oo_K$, we see that $b=\pm 1$. Hence, if we fix $n$, we can find the finite set of integer solutions of this polynomial equation in $a$. However, to solve it for all $n$ we have to invoke the much deeper theorem from \cite{bilu} which tells us that if there is a solution then $n<4$, $n=5$ or $n=12$. Indeed, with the terminology and notation of \textit{loc. cit.}, one sees that $u_n=(\alpha^n-\beta^n)/(\alpha-\beta)=\pm 1$ is a Lucas number without primitive divisor, so the Lucas pair $(\alpha,\beta)$ is $n$-defective.
\end{remark}

\section{Optimal curves} \label{optimal}
As we have seen in Section \ref{existencepp}, the strategy we have applied so far works in any dimension. If we now have to restrict ourselves to the dimensions less than or equal to $3$ is because, in these cases, the condition `has an indecomposable principal polarization' is geometrically sufficient to be the Jacobian of a curve. This  is not true  when the dimension is bigger, as it is proved simply by noting that the dimension of the moduli space of curves of genus $g$, $3g-3$, is less than the dimension of the moduli space of principally polarized abelian varieties of dimension $g$, $g(g+1)/2$. However,
\begin{proposition}[\cite{oort-ueno}] \label{ou}
For $g \leq 3$, any geometrically indecomposable principally polarized abelian variety $(A,a)/K$ is the Jacobian of a curve $\bar{C}$ over $\bar{K}$.
\end{proposition}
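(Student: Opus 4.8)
The plan is to identify the image of the Torelli morphism with the locus of indecomposable objects in moduli, via a dimension count and a degeneration argument. Since only a model over $\bar{K}$ is required, I would first base change and assume $K=\bar{K}$ is algebraically closed, so the hypothesis reads: $(A,a)$ is an indecomposable principally polarized abelian variety of dimension $g\leq 3$, and we must exhibit a smooth genus $g$ curve $\bar{C}$ with $(\Jac\bar{C},j)\simeq(A,a)$. The case $g=1$ is trivial, since then $A$ is an elliptic curve and $A=\Jac A$. So assume $g\in\{2,3\}$, let $\mathcal{M}_g$ and $\mathcal{A}_g$ be the moduli spaces of smooth genus $g$ curves and of $g$-dimensional principally polarized abelian varieties, and let $t_g\colon\mathcal{M}_g\to\mathcal{A}_g$, $C\mapsto(\Jac C,j)$, be the \emph{Torelli morphism}. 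It suffices to show that $t_g(\mathcal{M}_g)$ is exactly the set $\mathcal{A}_g^{\mathrm{ind}}$ of indecomposable principally polarized abelian varieties. The inclusion $t_g(\mathcal{M}_g)\subseteq\mathcal{A}_g^{\mathrm{ind}}$ is precisely the fact recalled at the start of Section \ref{existencepp}: the theta divisor of $(\Jac C,j)$ is $\Sym^{g-1}C$, which is geometrically irreducible, so $(\Jac C,j)$ admits no abelian subvariety on which $j$ restricts to a principal polarization.

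For the reverse inclusion, I would first observe that $t_g$ is generically finite (indeed injective on points, by Torelli's theorem), so $\dim t_g(\mathcal{M}_g)=\dim\mathcal{M}_g=3g-3$; and $3g-3=g(g+1)/2=\dim\mathcal{A}_g$ exactly when $g=2$ or $3$. Since $\mathcal{A}_g$ is irreducible, $t_g(\mathcal{M}_g)$ is dense, so every $(A,a)\in\mathcal{A}_g^{\mathrm{ind}}$ is a limit of Jacobians of smooth genus $g$ curves. Concretely, after choosing a suitable smooth one-parameter degeneration (a general curve in $\mathcal{A}_g$ through $[(A,a)]$ meeting $t_g(\mathcal{M}_g)$ densely, normalized and pulled back along a finite base change so that it is induced by an actual family of curves), I obtain a family of smooth genus $g$ curves over a punctured smooth base $T\setminus\{t_0\}$ whose Jacobians tend to $(A,a)$ at $t_0$; by stable reduction (after a further ramified base change) this extends to a family of stable curves over $T$ with special fibre a stable curve $C_0$ of arithmetic genus $g$.

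Now I would identify the limit. By the theory of Néron models of Jacobians, the family of Jacobians extends to a semiabelian scheme over $T$ with special fibre $\Jac C_0$; on the other hand, since $[(A,a)]$ lies in $\mathcal{A}_g$ (not on a boundary), the moduli map extends over $t_0$ and, up to a finite base change, is induced by an abelian scheme over $T$ with special fibre $A$. Being an abelian scheme, the latter is the Néron model of its generic fibre, hence agrees with the former; so $\Jac C_0\simeq A$, compatibly with polarizations. In particular $\Jac C_0$ is an abelian variety, which forces the dual graph of $C_0$ to be a tree, i.e. $C_0$ is of \emph{compact type}. Then $C_0$ is a tree of smooth curves $\widetilde{C}_1,\dots,\widetilde{C}_r$ and $(\Jac C_0,j)\simeq\prod_{i=1}^{r}(\Jac\widetilde{C}_i,j_i)$ with the product polarization; since $(A,a)$ is indecomposable, there is only one factor, $r=1$, and $\bar{C}:=C_0$ is a smooth genus $g$ curve with $(\Jac\bar{C},j)\simeq(A,a)$. (The same argument shows a decomposable $(A,a)$ of dimension $\leq 3$ to be a product of Jacobians of smooth curves.)

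The crux, I expect, is the degeneration step, i.e. identifying the limiting polarized abelian variety with $\Jac C_0$: this rests on the Deligne--Mumford properness of $\overline{\mathcal{M}}_g$ and on the semistable-reduction / Néron-model theory of Jacobians, which is what pins down the special fibre of the extended family of Jacobians. By contrast, the dimension count is elementary and the indecomposability of Jacobians is already quoted. In these low genera one can avoid degenerations: for $g=2$, using Igusa's explicit description of $\mathcal{M}_2$ and $\mathcal{A}_2$; for $g=3$, by stratifying $\mathcal{A}_3$ according to the singularities of the theta divisor and matching the strata with plane quartics, hyperelliptic curves, and decomposable abelian varieties. The uniform argument via $\overline{\mathcal{M}}_g$ is, however, the cleanest.
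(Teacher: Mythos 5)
The paper offers no proof of this proposition at all---it is quoted directly from the cited reference of Oort--Ueno---and your argument is essentially a reconstruction of their original one: containment of the Torelli image in the indecomposable locus, density of that image for $g\le 3$ by the dimension count $3g-3=g(g+1)/2$, degeneration to a stable curve via properness of $\overline{\mathcal{M}}_g$, identification of the limit through N\'eron models/semiabelian reduction, and compact type plus indecomposability forcing the special fibre to be smooth. The outline is correct; the one detail worth pinning down is at the very end, where a compact-type stable limit could a priori also carry genus-$0$ components (which contribute no factor to $\Jac C_0$), but stability forces any such component of a tree to meet at least three branches, each containing a positive-genus component, so indecomposability excludes them and $C_0$ is indeed irreducible and smooth.
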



So, given $(A,a)/K$ as in Proposition \ref{ou}, the question boils down to know whether one can descend the curve $\bar{C}$ to a curve $C$ over $K$ such that $(\Jac C,j) \simeq (A,a)$. Surprisingly the answer is `not always'. 
\begin{theorem}[Arithmetic Torelli theorem] \label{torelli}
There is a unique model $C/K$ of $\bar{C}$ such that:
\begin{enumerate}
\item
If $\bar{C}$ is hyperelliptic, there is an isomorphism
$$
\begin{CD}
(\Jac C, j) & @>{\sim}>> (A, a).
\end{CD}
$$
\item
If $\bar{C}$ is not hyperelliptic, there is a unique quadratic character
$\varepsilon$ of $\Gal(\bar{K}/K)$, and an isomorphism
$$
\begin{CD}
(\Jac C, j) & @>{\sim}>> (A, a)_{\varepsilon}
\end{CD}
$$
where $(A,a)_{\varepsilon}$ is the quadratic twist of $A$ by
$\varepsilon$. \end{enumerate}
\end{theorem}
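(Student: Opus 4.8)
The plan is to carry out a Galois‑descent argument powered by the classical \emph{strong Torelli theorem} over $\bar K$ in the form that keeps track of automorphisms: for a curve $D$ of genus $\geq 2$ over an algebraically closed field, $\Jac$ induces an injective homomorphism $\Aut(D)\hookrightarrow\Aut(\Jac D,j_D)$ of groups of polarized automorphisms, which is an isomorphism when $D$ is hyperelliptic (the hyperelliptic involution being sent to $[-1]$) and has image of index $2$ otherwise, so that $\Aut(\Jac D,j_D)=\Aut(D)\times\{\pm1\}$ in the non‑hyperelliptic case; moreover any isomorphism $(\Jac D,j_D)\xrightarrow{\sim}(\Jac D',j_{D'})$ of principally polarized abelian varieties has the form $\pm\Jac(f)$ for an isomorphism $f\colon D\to D'$, where in the non‑hyperelliptic case both the sign and $f$ are unique, while in the hyperelliptic case both signs occur (and $f$ is unique once the sign is fixed). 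In particular $\Jac$ is faithful on isomorphisms. For $g=1$ one has $\bar C\simeq A_{\bar K}$ and the assertion is immediate, so I assume $g\in\{2,3\}$, which is all that is relevant by Proposition~\ref{ou}.

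Set $G=\Gal(\bar K/K)$ and fix an isomorphism $\psi\colon(\Jac\bar C,j)\xrightarrow{\sim}(A_{\bar K},a_{\bar K})$. Since $(A,a)$ descends to $K$, the maps $\phi_\sigma:=\psi^{-1}\circ{}^\sigma\!\psi\colon(\Jac{}^\sigma\!\bar C,{}^\sigma\!j)\to(\Jac\bar C,j)$ realise the descent datum attached to $(A,a)$ and satisfy the cocycle identity $\phi_{\sigma\tau}=\phi_\sigma\circ{}^\sigma\!\phi_\tau$. By Torelli, for each $\sigma$ there are a unique isomorphism $f_\sigma\colon{}^\sigma\!\bar C\to\bar C$ and a sign $\varepsilon_\sigma\in\{\pm1\}$ with $\Jac(f_\sigma)=\varepsilon_\sigma\phi_\sigma$. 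When $\bar C$ is hyperelliptic I take $\varepsilon_\sigma=1$ for every $\sigma$ (composition with the hyperelliptic involution flips the sign); then $(f_\sigma)$ inherits the cocycle identity from $(\phi_\sigma)$ by faithfulness of $\Jac$, so it is a Galois descent datum on the quasi‑projective variety $\bar C$, hence effective, and the resulting model $C/K$ satisfies $(\Jac C,j)\simeq(A,a)$. This is part~(1).

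Suppose $\bar C$ is non‑hyperelliptic. The isomorphisms $f_{\sigma\tau}$ and $f_\sigma\circ{}^\sigma\! f_\tau$ from ${}^{\sigma\tau}\!\bar C$ to $\bar C$ induce $\varepsilon_{\sigma\tau}\phi_{\sigma\tau}$ and $\varepsilon_\sigma\varepsilon_\tau\phi_{\sigma\tau}$ on Jacobians; were $\varepsilon_{\sigma\tau}\neq\varepsilon_\sigma\varepsilon_\tau$, both $\phi_{\sigma\tau}$ and $-\phi_{\sigma\tau}$ would be induced by a curve isomorphism, contradicting Torelli, so $\varepsilon_{\sigma\tau}=\varepsilon_\sigma\varepsilon_\tau$ and (by uniqueness) $f_{\sigma\tau}=f_\sigma\circ{}^\sigma\! f_\tau$. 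Thus $\varepsilon\colon\sigma\mapsto\varepsilon_\sigma$ is a quadratic character of $G$ and $(f_\sigma)$ is again a descent datum, yielding a model $C/K$ of $\bar C$ whose Jacobian carries the descent datum $(\varepsilon_\sigma\phi_\sigma)$, namely that of $(A,a)$ multiplied by the central cocycle $\varepsilon\in H^1(G,\{\pm1\})\subset H^1(G,\Aut(A_{\bar K},a_{\bar K}))$; by definition this is $(A,a)_\varepsilon$, so $(\Jac C,j)\simeq(A,a)_\varepsilon$. Finally $\varepsilon$ is intrinsic: any other choice of $\psi$ differs by an automorphism of $(A_{\bar K},a_{\bar K})$, which is of the form $\pm\Jac(h_0)$ with $h_0\in\Aut(\bar C)$, and conjugating the $\phi_\sigma$ by it leaves the signs $\varepsilon_\sigma$ unchanged. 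This is part~(2).

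It remains to establish uniqueness of the model $C$. Relative to any chosen base model, the $K$‑forms of $\bar C$ are classified by $H^1(G,\Aut\bar C)$ and the assignment $C'\mapsto(\Jac C',j')$ is the map $\iota_*$ to $H^1(G,\Aut(\Jac\bar C,j))$ induced by $\Aut\bar C\hookrightarrow\Aut(\Jac\bar C,j)$; since the cokernel $\{\pm1\}$ of this inclusion is central and $G$‑fixed and lifts to $\Aut(\Jac\bar C,j)^G$, the connecting map $\{\pm1\}\to H^1(G,\Aut\bar C)$ vanishes and $\iota_*$ is injective. Hence a model of $\bar C$ is determined by the isomorphism class of its Jacobian; chasing the same exact sequence shows that exactly one quadratic twist of $(A,a)$ arises as such a Jacobian, which yields both the uniqueness of $C$ and the uniqueness of the character $\varepsilon$. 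The main obstacle in all of this is the disciplined use of the strong Torelli theorem — especially the sign analysis promoting the local signs $\varepsilon_\sigma$ to a genuine character, which rests squarely on Torelli's uniqueness statement — together with the routine‑but‑delicate bookkeeping of base points that makes $\Jac$ of a curve isomorphism unambiguous and the verification that the descent data so produced are effective.
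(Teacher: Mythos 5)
Your proof is correct and follows exactly the route the paper indicates (it only sketches this result): Weil's descent criterion combined with the strong Torelli theorem and the dichotomy $\Aut(\Jac \bar{C},j)\simeq \Aut(\bar{C})$ versus $\Aut(\bar{C})\times\{\pm 1\}$, with the sign cocycle producing the character $\varepsilon$. Your cohomological argument for uniqueness of $C$ and $\varepsilon$ is a sound elaboration of what the paper leaves implicit.
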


\begin{remark}
It is tricky to find the right origin of the previous result. In \cite[Se.69]{serre-harvard}, Gouv\'ea indicates `Oort +\ldots' as a reference. One can indeed find in \cite[Lem.5.7]{oort89} a similar result (but this is 1991). Sekiguchi also worked on this question but after two errata, he gives in \cite{sekiguchi} only the existence of the model $C/K$ but does not speak about $\varepsilon$. One can also find this result in \cite[p.236]{mazur}.
\end{remark}

The notation $(A,a)_{\varepsilon}=(A_{\varepsilon},a_{\varepsilon})$ should be understood as follows. The variety $A_{\epsilon}$ is uniquely defined up to isomorphism by the following property: there exists a quadratic extension $L/K$ and an isomorphism $\phi : A \to A_{\epsilon}$ defined over $L$ such that for all $\sigma \in \Gal(\bar{K}/K)$ one has $\phi^{\sigma}=\varepsilon(\sigma) \phi$. The polarization $a_{\varepsilon}$ is the pull-back of $a$ by $\phi^{-1}$.\\

This result is a consequence of Weil's descent as explained in \cite[4.20]{serre-local} and of  Torelli theorem \cite[p.790-792]{matsusaka}. The schism which appears between the hyperelliptic and non hyperelliptic case is due to the fact that 
$$\Aut(\Jac C,j) \simeq \begin{cases} \Aut(C) & \textrm{if} \; C \; \textrm{is hyperelliptic,} \\
\Aut(C) \times \{\pm 1\} & \textrm{if} \; C \; \textrm{is non hyperelliptic.} 
\end{cases}
$$
\begin{definition}
The character $\varepsilon$ (or the discriminant of the extension $L/K$) is called \emph{Serre's obstruction}. By extension, in the hyperelliptic case, we say that $\varepsilon$ is trivial.
\end{definition}

Let us emphasize why this obstruction is an issue in our strategy. So far we have been able to prove in certain cases the existence of a geometrically indecomposable principally polarized abelian variety $(A,a)/k$ with Weil polynomial $(X^2+mX+q)^g$. Thanks to  Proposition \ref{ou}, we know that it is geometrically the Jacobian of a curve $\bar{C}$. If the obstruction is trivial, then $\bar{C}$ descends to a curve $C/k$ such that $\Jac C \simeq A$ and so $C$ is optimal. On the contrary, if the obstruction is not trivial, then $\bar{C}$ descends to a curve $C/k$ such that $\Jac C$ is isomorphic to the (unique) quadratic twist of $A$ and so its Weil polynomial is $(X^2-mX+q)^g$. In particular $\# C(k)=q+1-gm$ and $C$ is not optimal (and actually $C$ has  the minimum number of points a genus $g$ curve over $k$ can have). 

\subsection{The end of the genus $1$ and $2$ cases}
Since a genus $1$ curve over a finite field always has a rational point, it is an elliptic curve and Proposition \ref{prop-deu} tells us when an optimal genus $1$ curve exists (for the value of $N_q(1)$ see \cite{deuring} or \cite{serre-point}).\\  
When $g=2$, all genus $2$ curves are hyperelliptic so the obstruction is always trivial and the result is similar to Theorem \ref{theo-pp}, namely
\begin{theorem}[Serre]
There is no optimal curve of genus $2$ over $\FF_q$ if and only if $q=4$ or $9$ or $m^2-4q \in \{-3,-4,-7\}$.
\end{theorem}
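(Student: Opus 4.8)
The plan is to deduce the statement from Theorem~\ref{theo-pp}, Proposition~\ref{ou}, Lemma~\ref{toE3} and the Arithmetic Torelli theorem, the only genuinely genus-$2$ ingredient being that every genus $2$ curve is hyperelliptic. So I would begin by recording this: if $\bar C$ is a genus $2$ curve over $\bar k$ and $(\Jac \bar C, j)$ is (geometrically) isomorphic to a fixed polarized abelian surface $(A,a)/k$, then $\bar C$ is hyperelliptic, so case~(1) of Theorem~\ref{torelli} applies and $\bar C$ has a model $C/k$ with $(\Jac C, j)\simeq (A,a)$ over $k$; in other words Serre's obstruction is automatically trivial in genus $2$.

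I would then prove both implications by contraposition. For the ``only if'' direction, assume $q\notin\{4,9\}$ and $m^2-4q\notin\{-3,-4,-7\}$. Theorem~\ref{theo-pp} then supplies an elliptic curve $E/k$ of trace $-m$ together with an abelian surface $(A,a)/k$ carrying a geometrically indecomposable principal polarization with $A\sim E^2$. By Proposition~\ref{ou}, over $\bar k$ the pair $(A,a)$ is the canonically polarized Jacobian of a genus $2$ curve $\bar C$, which is hyperelliptic, so by the preliminary step it descends to a curve $C/k$ with $(\Jac C,j)\simeq (A,a)$. Hence $\chi_C=(X^2+mX+q)^2$, so $\#C(k)=q+1+2m$ and $C$ is optimal.

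For the ``if'' direction, assume $C/k$ is an optimal genus $2$ curve. By Lemma~\ref{toE3}, $\Jac C\sim E^2$ for an elliptic curve $E/k$ of trace $-m$ (so such an $E$ exists and Theorem~\ref{theo-pp} may be invoked for it). The canonical polarization $j$ on $\Jac C$ is principal, and it is geometrically indecomposable because its theta divisor $\Sym^1 C=C$ is geometrically irreducible. So $(\Jac C,j)$ is a geometrically indecomposable principally polarized abelian surface in the isogeny class of $E^2$, and Theorem~\ref{theo-pp} forces $q\notin\{4,9\}$ and $m^2-4q\notin\{-3,-4,-7\}$.

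The main point that needs care is conceptual rather than computational: one must be certain that the abelian surface produced by Theorem~\ref{theo-pp} really descends to a curve defined over $k$, not merely over $\bar k$, and this is exactly where the hyperellipticity of genus $2$ curves --- hence the triviality of Serre's obstruction --- is used. All the substantive work (the classification of rank-$2$ hermitian modules, the supersingular count, and the Honda--Tate input behind Lemma~\ref{toE3}) is already absorbed into the results we may cite, so once the Torelli step is in place the theorem falls out as a corollary.
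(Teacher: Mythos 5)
Your argument reproduces exactly the route the paper intends (the paper itself offers no proof beyond the remark that genus~$2$ curves are hyperelliptic, so the obstruction of Theorem~\ref{torelli} vanishes), and your ``if'' direction is complete: optimality forces $\Jac C\sim E^2$ with $E$ of trace $-m$ by Lemma~\ref{toE3}, the canonical polarization is principal and geometrically indecomposable, and Theorem~\ref{theo-pp} applies.

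There is, however, a genuine gap in your ``only if'' direction, at the sentence ``Theorem~\ref{theo-pp} then supplies an elliptic curve $E/k$ of trace $-m$.'' Theorem~\ref{theo-pp} does not supply such a curve: it is stated under the hypothesis ``let $E$ be an elliptic curve with trace $-m$,'' and by Proposition~\ref{prop-deu} (Deuring) no such $E$ exists exactly when $n\geq 3$ is odd and $p\mid m$. These cases are \emph{not} all absorbed by the excluded set $\{q=4,9\}\cup\{m^2-4q\in\{-3,-4,-7\}\}$: for $q=2^7$ one has $m=22$, so $p\mid m$ while $m^2-4q=-28$, and by Lemma~\ref{toE3} there is then no optimal genus~$2$ curve, even though your hypothesis for that direction is satisfied. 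So the implication you are trying to prove fails there, and no rearrangement of the cited results can rescue it; the statement needs the additional clause excluding the $q$ with $n\geq 3$ odd and $p\mid m$ (equivalently, the nonexistence of an elliptic curve of trace $-m$), which is how Serre's ``special $q$'' condition is usually phrased, and which the paper itself invokes for $n$ odd in the genus~$3$ discussion. With that clause added, your contrapositive argument goes through verbatim: existence of $E$, Theorem~\ref{theo-pp}, Proposition~\ref{ou}, hyperellipticity, and case~(1) of Theorem~\ref{torelli} yield a curve $C/k$ with $\chi_C=(X^2+mX+q)^2$.
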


In \cite{serre-point}, a closed formula for the value of $N_q(2)$ is given.
More recently, completing the work started by many  authors, we obtained in \cite{HNR09}  the complete picture for abelian surfaces, \ie we determined which isogeny classes contain the Jacobian of a genus $2$ curves in terms of the coefficients of the Weil polynomial.

\section{The genus $3$ case} \label{g3}

As there exist non hyperelliptic genus $3$ curves (the non-singular plane quartics), Serre's obstruction may not be trivial. One hope is that, for each $q$, there would be an optimal \emph{hyperelliptic} curve but this possibility has to be discarded: for instance, there does not exist any optimal hyperelliptic genus $3$ curves over $\FF_{2^{n}}$ with $n$ even since supersingular hyperelliptic curves do not exists in characteristic $2$ \cite{oort89}. Other counterexamples can be found in odd characteristic as well as it will be apparent in Proposition \ref{crazy}. Therefore, it is important to be able to compute Serre's obstruction. Currently, there is no perfect solution to this problem but we will summarize some of the ideas and partial answers which have been obtained. 

\subsection{Special families}
The key-idea is to use some families of curves with non trivial automorphisms, such that their Jacobian is a product of elliptic curves explicitly obtained as quotient by certain automorphism subgroups. Then one tries to reverse the process and see if one can glue given elliptic curves together to get a curve in the family. The possible quadratic extension one has to make during the construction  is Serre's obstruction. Let us illustrate this procedure with an example.

\begin{example}
The following family represents genus $3$ non hyperelliptic curves in characteristic $2$ with  automorphism group containing $(\ZZ/2\ZZ)^2$ 
$$C : (a (x^2+y^2) +c z^2 + xy +e z(x+y))^2=xyz(x+y+z), \quad ac(a+c+e) \ne 0.$$
The involutions are $(x:y: z)$ maps to  $(y:x:z), (x+z:y+z:z)$ or $(y+z:x+z:z)$. To get the equation of the curve $E_1=C/\langle (x:y:z) \mapsto (y:x:z)\rangle$, one introduces the invariant functions  $X=x+y,Y=xy$ and  finds
$$E_1: (a X^2 + c + Y+ e X)^2=Y(X+1).$$
Doing similarly with the other involutions and rewriting the  equations of the elliptic curves (see \cite{NR09} for details) one gets that  $\Jac C \sim E_1 \times E_2 \times E_3$ where
\begin{eqnarray*}
E_1 &: y^2+ xy &= x^3 + e x^2+ a^2 (a+c+e)^2, \\
E_2 &: y^2+ xy &= x^3 + e x^2+ c^2 (a+c+e)^2, \\
E_3 &: y^2+ xy &= x^3 + e x^2+ c^2 a^2.
\end{eqnarray*}
Conversely, we now want to glue ordinary elliptic curves $E_i$ with $j$-invariant $j_i \ne 0$. They can always be written $E_i : y^2+xy=x^3+ e x^2+ 1/j_i$ where, if $q>2$, $\Tr_{\FF_q/\FF_2}(e)=0$ if and only if $\Tr(E_i) \equiv 1 \pmod{4}$.
 Let $s_i^4=1/j_i$, then
\begin{eqnarray}
a &=& \frac{s_1 s_3}{s_2}, \nonumber \\
c &=& \frac{s_2 s_3}{s_1},  \nonumber \\
e &=& \frac{s_1 s_3}{s_2} + \frac{s_2 s_3}{s_1} + \frac{s_1 s_2}{s_3} \label{obsp2}.
\end{eqnarray}
Now, for instance, assume that $m \equiv -1 \pmod{8}$. This happens for  $n=35,37,63,\ldots$. We  choose $E=E_1=E_2=E_3$ an ordinary elliptic curve with trace $-m$ and $j$-invariant $j$ ($E$ exists since $2 \nmid m$). Since we can assume that $q>4$, the curve $E$ has an $8$-torsion point and it is not difficult to check that this implies (actually is equivalent to) $\Tr_{\FF_q/\FF_2} 1/j=0$. Hence
$$\Tr_{\FF_q/\FF_2}\left(\frac{s_1 s_3}{s_2} + \frac{s_2 s_3}{s_1} + \frac{s_1 s_2}{s_3}\right)=\Tr_{\FF_q/\FF_2}(1/j)=0.$$
On the other hand, since $\Tr(E) \equiv 1 \pmod{4}$, we have $\Tr_{\FF_q/\FF_2}(e)=0$ as well, so there is no obstruction to \eqref{obsp2}. Actually, we get an explicit equation
$$C : (j^{-1/4} (x^2+y^2+z^2+xz+yz)+xy)^2=xyz(x+y+z)$$
for the optimal curve.
\end{example}

Exploiting other families of curves in characteristic $2$, we get the following result.
\begin{theorem}[\cite{NR08,NR09}]
If $n$ is even, there exists an optimal curve over $\FF_{2^n}$ if and only if $n \geq 6$.\\
If $n$ is odd and $m \equiv 1,5,7 \pmod{8}$, there is an optimal curve over $\FF_{2^n}$.
\end{theorem}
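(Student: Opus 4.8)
The plan is to treat the parities of $n$ separately; in both cases Lemma~\ref{toE3} reduces us to deciding whether there is a genus~$3$ curve $C/\FF_q$, $q=2^n$, with $\Jac C\sim E^3$ for some elliptic curve $E$ of trace $-m$ --- equivalently, with Weil polynomial $(X^2+mX+q)^3$, hence with $\#C(\FF_q)=q+1+3m$.

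For $n$ even one has $m=2\sqrt q$, so $E$ is supersingular and $m^2-4q=0$. Since $0\notin\{-3,-4,-8,-11\}$, Theorem~\ref{theo-pp} says that a geometrically indecomposable principally polarized abelian threefold in the isogeny class of $E^3$ exists exactly when $q\notin\{4,16\}$, i.e. $n\ne 2,4$; as the canonically polarized Jacobian of a genus~$3$ curve is geometrically indecomposable, this already yields the non-existence of an optimal curve for $n=2,4$. For $n\ge 6$ even, the same theorem with Proposition~\ref{ou} produces a geometrically indecomposable $(A,a)/\FF_q$ which is geometrically the Jacobian of a curve $\bar C$; since supersingular hyperelliptic genus~$3$ curves do not exist in characteristic~$2$, $\bar C$ is a smooth plane quartic and one has to make Serre's obstruction trivial. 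I would do this concretely rather than abstractly: exhibit a family of plane quartics over $\FF_{2^n}$ with extra automorphisms whose Jacobian decomposes as a product of three supersingular elliptic curves --- a family adapted to the value $j=0$, hence \emph{not} the family of the Example below, whose elliptic quotients have $j$-invariant $1/(ac)^2\ne 0$ --- and then reverse the construction, choosing the parameters so that each factor is the supersingular curve of trace $-m$ (its quadratic twist when $4\mid n$). Because Frobenius is bijective on fourth powers in characteristic~$2$, the gluing data land automatically in $\FF_q$, and the residual obstruction to realizing the quartic itself over $\FF_q$ is an explicit condition in $\FF_q$ which I expect to hold as soon as $q$ is large enough; this is the curve constructed in \cite[Lem.\,2.3.8]{HDRritz} for all $n\ge 5$ (cf.\ \cite{NR08}), in particular for all even $n\ge 6$, and together with the case $n=2,4$ above it gives the asserted equivalence. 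Optimality of $C$ then follows from $\chi_C=\chi_E^3=(X^2+mX+q)^3$.

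For $n$ odd the hypothesis $m\equiv 1,5,7\pmod 8$ forces $m$ odd, so $p\nmid m$, an $E$ of trace $-m$ exists and is ordinary with $m^2-4q<0$; reducing mod~$8$ one finds $m^2-4q\equiv 1$, so $m^2-4q\notin\{-3,-4,-8,-11\}$ and $q\ne 4,16$, whence (by Theorem~\ref{theo-pp} and Proposition~\ref{ou}) we are again in the regime where a geometrically indecomposable Jacobian exists geometrically. For $m\equiv 5,7\pmod 8$ I would specialise the family of the Example: with $E_1=E_2=E_3=E$ of $j$-invariant $j\ne 0$ and $s:=j^{-1/4}\in\FF_q$ (fourth roots always exist in characteristic~$2$), the relations \eqref{obsp2} collapse to $a=c=e=s$ --- the genericity condition $ac(a+c+e)=s^3$ being nonzero --- and yield the smooth plane quartic
$$C:\ \bigl(s(x^2+y^2+z^2+xz+yz)+xy\bigr)^2=xyz(x+y+z),$$
whose three elliptic quotients are each isomorphic to $Q:y^2+xy=x^3+sx^2+s^4$, of $j$-invariant $j$. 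It remains to see that $Q$ is $\FF_q$-isomorphic to $E$ and not to its quadratic twist: two ordinary elliptic curves over $\FF_q$ with equal $j\ne 0$ are $\FF_q$-isomorphic iff they have the same trace mod~$4$, which is read off the trace of their Artin--Schreier coefficient, i.e.\ of $\Tr_{\FF_q/\FF_2}(s)=\Tr_{\FF_q/\FF_2}(1/j)$; tracking the $8$-torsion criterion through the two residues $m\equiv 5$ and $m\equiv 7\pmod 8$ one checks that $Q$ has the same trace mod~$4$ as $E$, so $\Jac C\sim E^3$ and $C$ is optimal. The case $m\equiv 1\pmod 8$ is \emph{not} covered by this family (the same computation would give the quotients isomorphic to the twist of $E$, of trace $+m$, hence the \emph{minimal} curve); it requires another family with extra automorphisms, analysed in \cite{NR09}, for which the analogous descent goes through. (The residue $m\equiv 3\pmod 8$ behaves like $m\equiv 1$ but no family rescues it, which is why it is omitted from the statement.)

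The $2$-adic bookkeeping --- residues of $m$ and of $m^2-4q$, the $8$-torsion criterion, the identification of the elliptic quotients --- is routine. The real obstacles are: in the $n$ even case, finding a family of plane quartics adapted to $j=0$, computing the isogeny decomposition of its Jacobian and the relevant $\FF_{2^n}$-forms, checking smoothness of the chosen member, and --- hardest of all --- pinning Serre's obstruction down precisely, so that the descent to $\FF_{2^n}$ goes through for every even $n\ge 6$; and in the odd case, supplying the family that handles $m\equiv 1\pmod 8$ and carrying out the (more delicate) trace-matching there.
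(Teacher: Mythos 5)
Your plan follows the same route as the paper's own presentation: the survey states this theorem as a citation of \cite{NR08,NR09} and only illustrates the $m\equiv 7\pmod 8$ case via the diagonal gluing in the preceding Example, and your outline reproduces that illustration, correctly extends it to $m\equiv 5\pmod 8$ (your trace-mod-$4$ versus $\Tr_{\FF_q/\FF_2}(1/j)$ matching does check out, and correctly fails for $m\equiv 1,3\pmod 8$), handles $n=2,4$ via Theorem~\ref{theo-pp}, and defers exactly the same pieces (the supersingular family for even $n\ge 6$ and the extra family for $m\equiv 1\pmod 8$) to the same references. Since the survey itself proves none of those deferred pieces, your proposal is as complete as the paper's treatment and structurally identical to it.
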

When $n>1$ is odd and $m$ is even, there is of course no optimal curve since there is no elliptic curve with trace $-m$. So only the case $m \equiv 3 \pmod{8}$ is missing to get a complete answer when $p=2$.


More recently, Mestre \cite{mestreg3}  has worked with a family of curves with automorphism group $S_3$ and showed that if $p=3$ (resp. $ p=7$), $3 \nmid m$ (resp. $3 | m$) and $-m$ is a non-zero square modulo $7$ (resp. $n \geq 7$), then there exists an optimal curve over $\FF_{3^n}$ (resp. $\FF_{7^n}$).

To conclude on this approach, let us point out  that one could use the family with automorphism group $(\ZZ/2\ZZ)^2$ (called \emph{Ciani quartics}) also in characteristic greater than $2$, since Serre's obstruction has been worked out in \cite{HLP}. Unfortunately, one does not see when this obstruction is trivial knowing only $m$ (one needs the equations of the elliptic factors to decide).

\subsection{Serre's analytic strategy}
Inspired by results of Klein \cite[Eq.118,p.462]{klein} and Igusa \cite[Lem.10,11]{igusag3}, in a 2003 letter to Jaap Top \cite{LR}, Serre stated a strategy to compute the obstruction when the characteristic is different from $2$. Roughly speaking, his idea was that  a certain Siegel modular form evaluated at a `moduli point' $(A,a)/K$ is a square in $K$ if and only if the obstruction is trivial. In a series of three papers, it was shown that this is accurate (first for  Ciani quartics, then in general) and how to compute the obstruction in the case of the power of a CM elliptic curve. Let us  state the general  result without any comments on the proof which would lead us to far from our initial purpose (see however \cite[Chap.4]{HDRritz} for details).

\begin{theorem}[\cite{LRZ}] \label{LRZth}
Let $A = (A,a)/K$ be a principally polarized abelian threefold defined over a field $K$ with $\chr K \ne 2$. Assume that $a$ is geometrically indecomposable. There exists a unique primitive geometric Siegel modular form of weight $18$ defined over $\ZZ$,  denoted $\chi_{18}$, such that 
\begin{enumerate}[i)]
 \item $(A,a)$ is a hyperelliptic Jacobian if and only if $\chi_{18}(A,a)=0$.
\item $(A,a)$ is a non hyperelliptic Jacobian if and only if $\chi_{18}(A,a)$ is a non-zero square. \label{LRZcases2}
\end{enumerate}
Moreover, if $K \subset \CC$,  let 
\begin{itemize}
\item $(\omega_{1}, \omega_2, \omega_{3})$ be a basis of regular differentials on $A$;
\item $\gamma_{1}, \dots \gamma_{6}$ be a symplectic basis (for $a$) of $H_1(A,\ZZ)$;
\item $\Omega_a := [\Omega_{1} \ \Omega_{2}] =[\int_{\gamma_j} \omega_i]$ be a period matrix with $\tau_a : = \Omega_{2}^{- 1} \Omega_{1} \in \HH_{3}$ a Riemann matrix.
\end{itemize}
Then $(A,a)$ is a Jacobian if and only if
\begin{equation} \label{chiex}
\chi_{18}((A,a),\omega_1\wedge \omega_2 \wedge \omega_3):= \frac{(2\pi)^{54}}{2^{28}} \cdot \frac{\prod_{[\epsm]} \theta[\epsm](\tau_a)}{\det(\Omega_2)^{18}}
\end{equation}
is a square in $K$. 
\end{theorem}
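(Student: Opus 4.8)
The statement is really a packaging of work done elsewhere (the three-paper sequence culminating in \cite{LRZ}), so the plan is to assemble the pieces rather than to prove everything from scratch; I will indicate where the genuine content lies. First I would set up the analytic side over $\CC$. Given $(A,a)/\CC$ with $a$ geometrically indecomposable, Proposition \ref{ou} guarantees $(A,a) = (\Jac \bar C, j)$ for some genus $3$ curve $\bar C/\CC$. Choosing $(\omega_i)$, $(\gamma_j)$ and the period matrix $\Omega_a$ as in the statement, one forms the product $\prod_{[\epsm]} \theta[\epsm](\tau_a)$ over the $36$ even theta characteristics; this is, up to the explicit normalization by $(2\pi)^{54}/2^{28}$ and $\det(\Omega_2)^{18}$, the classical Siegel modular form of weight $18$ on $\Sp_6(\ZZ)$, and the normalization is chosen precisely so that the resulting expression depends only on the pair $((A,a),\omega_1\wedge\omega_2\wedge\omega_3)$ and not on the choice of symplectic basis — this invariance is a direct check using the transformation formula for theta constants under $\Sp_6(\ZZ)$. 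The key classical input, going back to Klein and Igusa, is the \emph{dichotomy}: $\bar C$ is hyperelliptic if and only if $\prod_{[\epsm]}\theta[\epsm](\tau_a) = 0$ (a theta-null vanishes exactly in the hyperelliptic locus), and in the non-hyperelliptic case the quantity in \eqref{chiex} equals, up to an explicit rational constant and a $4$-th power in $\CC^\times$, the classical Klein invariant of the plane quartic model of $\bar C$, which is (a power of) the discriminant times a square. This is the step where the real work of Klein/Igusa/\cite{LRZ} sits; I would cite it.

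Next I would address the descent over a general field $K$ with $\chr K \ne 2$. The idea is to combine the analytic dichotomy with the Arithmetic Torelli theorem \ref{torelli}. By Proposition \ref{ou}, $(A,a)_{\bar K} \simeq (\Jac \bar C, j)$; by Theorem \ref{torelli} there is a model $C/K$ of $\bar C$ and a quadratic character $\varepsilon$ with $(\Jac C, j)\simeq (A,a)_\varepsilon$. In the hyperelliptic case $\varepsilon$ is trivial, $(A,a)$ itself is a Jacobian, and one must show $\chi_{18}(A,a)=0$: this follows because a theta-null vanishes identically on the hyperelliptic locus, a statement that descends from $\CC$ to any $\chr \ne 2$ field since $\chi_{18}$ is defined over $\ZZ$ and the hyperelliptic locus is defined over the prime field. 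In the non-hyperelliptic case, the essential point is the comparison of how $\chi_{18}$ transforms under quadratic twisting of $(A,a)$ with how the class of $\bar C$'s plane-quartic model transforms: $\chi_{18}$ has weight $18$, so twisting by $\varepsilon$ (which at the level of periods multiplies $\Omega_a$ by $\sqrt{d}$, $d$ a representative of $\varepsilon$) multiplies $\chi_{18}((A,a),\omega_1\wedge\omega_2\wedge\omega_3)$ by $d^{18}$ — a square — hence the \emph{square class} of $\chi_{18}$ is a twist-invariant of the pair. Therefore $\chi_{18}(A,a)$ is a nonzero square in $K$ iff $\chi_{18}((A,a)_\varepsilon) = \chi_{18}(\Jac C, j)$ is, and the latter is a nonzero square iff $\varepsilon$ is trivial, i.e. iff $(A,a)$ is itself a non-hyperelliptic Jacobian. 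This last equivalence ($\chi_{18}(\Jac C,j)$ is a square $\iff$ the canonical model of the non-hyperelliptic curve $C$ is defined over $K$) is the heart of Serre's strategy and is what \cite{LRZ} establishes; I would quote it and only sketch that it reduces, via the arithmetic Torelli theorem, to comparing the obstruction cocycle with the cocycle defining the square class of the Klein invariant.

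Finally I would assemble: items i) and ii) over $\CC$ follow from the analytic dichotomy plus the identification of \eqref{chiex} with the Klein invariant; over general $K$ with $\chr K \ne 2$ they follow from the twist-invariance of the square class of $\chi_{18}$ together with Theorem \ref{torelli}. Uniqueness and primitivity of $\chi_{18}$ as a geometric Siegel modular form of weight $18$ over $\ZZ$ is a separate, purely modular statement: the space of scalar Siegel modular forms of weight $18$ and degree $3$ is low-dimensional and $\prod_{[\epsm]}\theta[\epsm]$ is, up to scalar, the unique cusp form there vanishing on the hyperelliptic locus; normalizing to have content $1$ over $\ZZ$ (primitivity) pins it down uniquely. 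The main obstacle, and the part I would not attempt to reprove, is the passage from the analytic/complex identity to the arithmetic statement over an arbitrary field of characteristic $\ne 2$ — i.e. showing that the square class of $\chi_{18}$ genuinely computes Serre's obstruction cocycle — which is the technical core of \cite{LRZ}; everything else is formal manipulation of theta transformation laws, the weight-$18$ homogeneity under twists, and invocation of Theorem \ref{torelli}.
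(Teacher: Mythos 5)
The paper gives no proof of this theorem --- it is quoted verbatim from \cite{LRZ} with an explicit disclaimer that the proof would lead too far afield --- so the only thing to assess is the internal soundness of your sketch. Your overall architecture (analytic dichotomy of Klein--Igusa over $\CC$, then descent via the arithmetic Torelli theorem and the behaviour of $\chi_{18}$ under quadratic twist) is the right one, but the crux of the descent step is computed wrongly, and wrongly in a way that would make the theorem false. You claim that twisting by $\varepsilon$ multiplies $\chi_{18}((A,a),\omega_1\wedge\omega_2\wedge\omega_3)$ by $d^{18}$, hence that the square class of $\chi_{18}$ is a twist-invariant. It is not. The twist isomorphism $\phi\colon A\to A_\varepsilon$, defined over $K(\sqrt{d})$, rescales each regular differential by $d^{\pm 1/2}$, hence rescales $\omega_1\wedge\omega_2\wedge\omega_3$ by $d^{\pm 3/2}$; since $\chi_{18}$ has weight $18$, its value is multiplied by $d^{\mp 27}$. (Equivalently, in the analytic formula the theta constants depend only on $\tau_a$, which is unchanged, while $\det(\Omega_2)^{18}$ picks up $d^{27}$.) Because $27$ is odd, the square class of $\chi_{18}$ changes by exactly the class of $d$. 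This non-invariance is the entire point: it is why a form of weight $18$ in genus $3$ (so that $gh/2=27$ is odd) can detect Serre's obstruction at all. With your claimed invariance the argument is self-contradictory, since the nontrivial quadratic twist of a non-hyperelliptic Jacobian is \emph{not} a Jacobian, yet would have the same square class of $\chi_{18}$.

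The correct logic for part ii) is: $(\Jac C, j)\simeq (A,a)_\varepsilon$ is always a non-hyperelliptic Jacobian, so $\chi_{18}(\Jac C, j)$ is a nonzero square (this is the content of \cite{LRZ}, via Klein's identity expressing $\chi_{18}$ of a plane quartic as a constant times the square of its discriminant); then $\chi_{18}(A,a)$ differs from it by the class of $d$, hence is a square if and only if $\varepsilon$ is trivial, i.e.\ if and only if $(A,a)$ is itself a Jacobian. Your sentence ``the latter is a nonzero square iff $\varepsilon$ is trivial'' is likewise inconsistent with $\Jac C$ being a Jacobian unconditionally. Fix the exponent, reverse the roles of the two sides of the comparison, and the sketch becomes the standard argument; the remaining genuinely hard inputs (Klein's formula over $\ZZ[1/2]$, the characteristic-$p$ reduction, and uniqueness/primitivity of $\chi_{18}$) are correctly deferred to \cite{LRZ}.
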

Let us recall that the \emph{Thetanullwerte} $\theta[\epsm](\tau)$ are the $36$ constants  such that
$$
[\epsm] = \Ch{\epsilon_{1}}{\epsilon_{2}} \in \{0,1\}^3 \oplus \{0, 1\}^3,
$$
with   $\epsilon_1  {^t \epsilon_2} \equiv 0 \pmod{2}$ 
and for
$\tau \in \HH_{3}$
$$\theta  \Ch{\epsilon_{1}}{\epsilon_{2}}(\tau) =
\sum_{n \in \ZZ^3}
\exp(i \pi (n + \epsilon_{1}/2)  \tau {^t (n + \epsilon_{1}/2)} +  i \pi
(n + \epsilon_{1}/2){^t \epsilon_{2}}).
$$

\begin{remark}
For a different approach on this result, see \cite{meagher}.
\end{remark}

The initial aim of Serre's letter was of course the existence of optimal curves of genus $3$. 
However, one does not know how to compute directly the value of $\chi_{18}$ over finite fields. Therefore, as Serre suggested, when $A$ is ordinary, we lift $(A,a)$ canonically over a number field and there, we  use formula \eqref{chiex}. Doing the computation with enough precision, we can recognize this value as an algebraic number. Finally we reduce it to the initial finite field to see if it is a square.

As the Jacobian of an optimal curve is isogenous to the power of an elliptic curve $E$, in \cite{Rit-serre}, we worked out this procedure explicit in the particular case  $A=E^3$. Let $a_0$ be the product principal polarization on $E^3$ and $M=a_0^{-1} a \in M_3(\End(E))$. When $\End(E)$ is an order in an imaginary quadratic field, it is well known that $M$ is the matrix of a principal polarization on $E^3$ if and only if $M$ is a positive definite hermitian matrix with determinant $1$ (see Remark \ref{othereq} and \cite[p.209]{mumford}). Moreover,  when $E$ is defined over a number field,  we  show how to translate the data $(E^3,a_0 M)$ into a  period matrix of the corresponding torus in order to compute the analytic expression of $\chi_{18}$. Let us illustrate this procedure with the following example.

\begin{example} \label{ex19}
Does there exist an optimal curve $C$ of genus $3$ over $k=\FF_{47}$ ? If so, by Lemma \ref{toE3} we know that $\Jac C$ is isogenous to $E^3$ where $E$ is an elliptic curve with trace $-\lfloor 2 \sqrt{47} \rfloor=-13$. The curve $E$ is then an ordinary elliptic curve and $\End(E)$ contains $\ZZ[\pi] \simeq \ZZ[(13+\sqrt{13^2-4 \cdot 47})/2]=\ZZ[\tau]$ (where $\pi$ is the $k$-Frobenius endomorphism and $\tau=(1+\sqrt{-19})/2$). Hence $\End(E)=\ZZ[\pi]$ is the ring of integers $\oo_L$ of $L=\QQ(\sqrt{-19})$. Since $\oo_L$ is principal, $E$ is unique up to isomorphism.
Using the work of \cite{schiemann}, one can see that, up to automorphism, there is a unique positive definite hermitian matrix $M \in M_3(\oo_L)$ of determinant $1$ which is indecomposable. In the language of Section \ref{equivalence}, this means that there exists a unique positive definite unimodular indecomposable rank $3$ hermitian $\oo_L$-module. The abelian threefold $(E^3,a_0 M)$ is then the unique principally polarized  geometrically indecomposable abelian threefold with Weil polynomial $(X^2+13 X+47)^3$, up to isomorphism.\\
Lifting $E$ canonically over $\QQ$ as 
$\bar{E} : y^2=x^3-152 x-722$
we can consider the principally abelian threefold $(\bar{E}^3,a_0 M)$ since $\End(\bar{E})=\oo_L$ as well. Let $[w_1 \; w_2]$ be a period matrix of $E$ with respect to the canonical regular differential $dx/(2y)$. If we let 
$$\Omega_0= \left[ \
\left(\begin{array}{ccc}
w_{1} & 0           & 0           \\
0           & w_{1} & 0           \\
0           & 0           & w_{1}
\end{array}\right)
\left(\begin{array}{ccc}
w_{2} & 0           & 0          \\
0           & w_{2} & 0          \\
0           & 0           & w_{2}
\end{array}\right)
\right],$$
$\CC^3/\Omega_0 \ZZ^6 \simeq \bar{E}^3(\CC)$ with the product polarization $a_0$.  We then need to find a symplectic basis of $\Omega_0 \ZZ^6$ for the polarization $a_0 M$. 
It is  not difficult to prove that the first Chern class of $a_0 M$ with respect to the pull-back $\omega_i$ of the differentials $dx/(2y)$ on each curve is represented by the matrix 
$$H=\frac{1}{w_1 \overline{w}_2} {^t M}.$$
The alternated form $T$ classically associated to $H$ on the lattice  $\Omega_0 \ZZ^{6}$ is 
$T=\im(\tp{\Omega_0} H \overline{\Omega}_0)$.
One then finds a matrix $B \in \GL_{6}(\ZZ)$ such that 
$$BT\tp{B}=\left[\begin{array}{cc} 0 & \bfI_3  \\ -\bfI_3 & 0 \end{array}\right]$$ and $\Omega=\Omega_0 \tp{B}$ is a period matrix for the polarization $a_0 M$. Finally, one computes an approximation of   $$\chi=\chi_{18}((\bar{E}^3,a_0 M),\omega_1 \wedge \omega_2 \wedge \omega_3)$$ thanks to the analytic formula \eqref{chiex} and we recognize it as an element of $L$. We find in our case
$$\chi=(2^{19} \cdot 19^7)^2.$$
The value $\chi$ is a non-zero square over $\FF_{47}$ so by Theorem \ref{LRZth} \eqref{LRZcases2} Serre's obstruction is trivial and there is a non hyperelliptic optimal curve of genus $3$ over $k$.
\end{example}

Similar computations show that there is an optimal curve over $\FF_q$ for $q=61,137,277$ but not for $q=311$. Note that this result for $q=47$ and $q=61$ has already been obtained in \cite{top} using explicit models and the others have been confirmed by \cite{alek}. In \cite{Rit-serre},  tables of values of $\chi$ as the one from Example \ref{ex19} are given for $(\bar{E}^3,a_0 M)$ where $\bar{E}$ is an elliptic curve with class number $1$ and $M$ is taken from \cite{schiemann}. From them, we can get for instance:

\begin{proposition} \label{crazy}
Assume that $q=p^n$ is  such that $4q=m^2+d$ with $d=7$ (resp. $19$). Then there exists an (explicit) genus $3$ optimal curve over $\FF_q$ if and only if 
$$m \equiv 1,2 \;  \textrm{or} \; 4 \pmod{7} \; \textrm{(resp.} \;  \left(\frac{m}{19}\right) \left(\frac{-2}{p}\right)=1 \textrm{)}.$$
Moreover if this curve exists, it is non hyperelliptic.\\
Assume that $q=p^n$ is such that $4q=m^2+43$. In particular $43$ is a square in $\FF_p$, let say $43=r^2$ with $r \in \FF_p$. Then there exists a  genus $3$ optimal curve over $\FF_q$ if and only if 
$$ \left(\frac{m}{43}\right) \left(\frac{\alpha}{p}\right)=1$$
where $\alpha$ is either $-2\cdot 3 \cdot 7,-487,-47 \cdot 79 \cdot 107 \cdot 173$ or $-15156 \pm 8214 r$. Moreover if this curve exists, it is non hyperelliptic.  
\end{proposition}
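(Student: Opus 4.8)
The plan is to run, for the isogeny class of $E^{3}$ with $E/\FF_q$ the elliptic curve of trace $-m$, the three–step strategy of the introduction, and then to read off the numerical conditions from Serre's modular form $\chi_{18}$ (for $d=19,43$) or from an explicit characteristic $2$ family (for $d=7$). First I would reduce to a statement about Hermitian modules. Since $4q=m^{2}+d>m^{2}$ with $d$ prime, $q$ is not a square so $n$ is odd, and one checks at once that $p\nmid d$ (otherwise $4q=m^{2}+d$ would force $p=d$ and no admissible $(q,m)$ exists for these three primes); hence by Proposition~\ref{prop-deu} the curve $E/\FF_q$ of trace $-m$ exists, is ordinary, and—because $-d\equiv 1\pmod 4$—has $\End(E)=\oo_L$, the maximal order of $L=\QQ(\sqrt{-d})$, which has class number $1$. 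By the equivalence of Section~\ref{equivalence} and the fact that $\oo_L$ is a principal ideal ring, every principally polarized abelian threefold isogenous to $E^{3}$ is isomorphic to some $(E^{3},a_0M)$ with $M\in\Mm_3(\oo_L)$ an indecomposable or decomposable unimodular positive definite Hermitian matrix, geometrically indecomposable exactly when $M$ is indecomposable; such $M$ exist here since $-d\notin\{-3,-4,-8,-11\}$ (Proposition~\ref{hoff}). By Proposition~\ref{ou} each $(E^{3},a_0M)$ is geometrically a Jacobian $\bar C_M$, so by Lemma~\ref{toE3} together with the Arithmetic Torelli Theorem~\ref{torelli} an optimal genus $3$ curve over $\FF_q$ exists if and only if $\bar C_M$ descends to $\FF_q$ with $(\Jac,j)\simeq(E^{3},a_0M)$ for some such $M$; by Theorem~\ref{LRZth} this holds if and only if $\chi_{18}(E^{3},a_0M)$ is a nonzero square in $\FF_q$ for some $M$, and the non-hyperellipticity assertion will follow from the fact that these values are never $0$ modulo the relevant prime.

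For $d=19$ and $d=43$ I would make this effective by Serre's analytic strategy, exactly as in Example~\ref{ex19}. By the classification of rank $3$ Hermitian modules (\cite{hoffmann}, \cite{schiemann}) there is a unique indecomposable unimodular positive definite Hermitian $\oo_L$-module when $d=19$ and only finitely many when $d=43$. For each I would take the canonical lift $\bar E/\QQ$ (defined over $\QQ$ because $h(L)=1$), build the period matrix $\Omega=\Omega_0\,{}^{t}\!B$ of $(\bar E^{3},a_0M)$ as there, evaluate~\eqref{chiex} with enough precision, and recognize the result as an algebraic number; these are the tabulated values of \cite{Rit-serre}. Up to rational squares one obtains, for $d=43$, the numbers $-2\cdot 3\cdot 7$, $-487$, $-47\cdot 79\cdot 107\cdot 173$ and $-15156\pm 8214\sqrt{43}$ (the last two lying in $\QQ(\sqrt{43})$, a field that enters through the Chowla--Selberg periods), and for $d=19$ the value appearing in Example~\ref{ex19}.

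The remaining step—and the point I expect to be the real obstacle—is the arithmetic translation of ``$\chi_{18}(E^{3},a_0M)$ is a nonzero square in $\FF_q$'' into the stated symbols. One reduces $\chi_{18}(E^{3},a_0M)$ modulo the prime above $p$ compatible with $E/\FF_q$, and three things need care: (i) since $n$ is odd, being a nonzero square in $\FF_q$ is equivalent to being a nonzero square in $\FF_p$, so only a Legendre symbol over $\FF_p$ survives; (ii) the compatible prime $\frakp\mid p$ is the one containing $\pi=(-m+\sqrt{-d})/2$, so that $\sqrt{-d}$ reduces to $m$ and (when $p\equiv 1\pmod 4$) $\sqrt{43}$ reduces to $r$—this is how $m\bmod d$, resp.\ $r\in\FF_p$, enters; (iii) one must track the quadratic twist needed so that the reduction of $\bar E$ actually has trace $-m$ over $\FF_q$, which multiplies the square class of $\chi_{18}$ by $\left(\frac{m}{d}\right)$, a point resting on genus theory of the form $x^{2}+dy^{2}$ over the principal ring $\oo_L$. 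Assembling (i)--(iii) turns ``some $\chi_{18}(E^{3},a_0M)$ is a nonzero square in $\FF_q$'' into exactly $\left(\frac{m}{19}\right)\left(\frac{-2}{p}\right)=1$ when $d=19$, and into ``$\left(\frac{m}{43}\right)\left(\frac{\alpha}{p}\right)=1$ for some $\alpha$ in the list'' when $d=43$; that $\chi_{18}(E^{3},a_0M)\not\equiv 0\pmod{\frakp}$—hence the curve is non-hyperelliptic whenever it exists—follows because $p$ is coprime to the finitely many explicit primes occurring in these values.

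Finally, the case $d=7$ must be treated by hand: by Remark~\ref{m2} the only solutions of $4q=m^{2}+7$ are $q\in\{2^{3},2^{5},2^{13}\}$, so $\chr K=2$ and Theorem~\ref{LRZth} does not apply. Here I would use the characteristic $2$ family of quartics with $(\ZZ/2\ZZ)^{2}$-automorphisms of~\eqref{obsp2}: the obstruction to gluing $E\times E\times E$ into such a curve is governed by $\Tr_{\FF_q/\FF_2}(e)=0\iff\Tr(E)\equiv 1\pmod 4$ together with $\Tr_{\FF_q/\FF_2}(1/j)=0$, and running these conditions through for $m=5,11,181$ (that is, $q=8,32,8192$) yields precisely the congruence $m\equiv 1,2,4\pmod 7$, along with an explicit equation for the curve in the cases where it exists. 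The genuinely delicate part of the whole argument is step~(iii) above—pinning down the correct prime and twist and collapsing the a priori $\frakp$-dependent condition into the clean symbols—together with, for $d=43$, enumerating all the Hermitian modules and evaluating~\eqref{chiex} for each with enough precision to identify the $\alpha$'s.
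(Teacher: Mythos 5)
Your overall route is the paper's own: Proposition~\ref{crazy} is extracted from the tables of $\chi_{18}$-values computed in \cite{Rit-serre} by Serre's analytic strategy, exactly as you describe, and your reduction to unimodular Hermitian $\oo_L$-modules (using $h(L)=1$, $\End(E)=\oo_L$, Propositions~\ref{hoff} and~\ref{ou}, Theorems~\ref{torelli} and~\ref{LRZth}) is correct. The problem is that the step you yourself flag as ``the real obstacle'' --- your item~(iii) --- is where the entire content of the statement lives, and you do not carry it out. Example~\ref{ex19} shows that for $d=19$ the value of $\chi_{18}$ on the standard lift is the rational square $(2^{19}\cdot 19^{7})^{2}$; if the criterion were simply ``this reduces to a nonzero square mod $p$'' the answer would be \emph{yes} for every admissible $q$, contradicting the paper's own $q=311$ example. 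So the whole condition $\left(\frac{m}{19}\right)\left(\frac{-2}{p}\right)=1$ is carried by the twist bookkeeping: one must determine which quadratic twist of $\bar E\bmod\frakp$ has trace $-m$ (a nontrivial CM computation, not a formal consequence of ``genus theory of $x^2+dy^2$''), and how the twisting scalar enters $\chi_{18}$ through $\omega_1\wedge\omega_2\wedge\omega_3$ with an odd exponent. Asserting that this ``assembles into exactly'' the displayed symbols is not a proof; it is the proposition restated.

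Your observation that $4q=m^{2}+7$ forces $p=2$, where Theorem~\ref{LRZth} is unavailable (every element of $\FF_{2^n}$ is a square, so the criterion is vacuous), is a genuinely good catch that the survey glosses over, and reducing to the three fields $q\in\{2^3,2^5,2^{13}\}$ is the right move. But the conclusion you claim for that check cannot be what the computation yields: the Klein quartic over $\FF_8$ is optimal ($N_8(3)=24$) with $m=5\equiv 5\pmod 7$, and $N_{32}(3)=64$ so there is no optimal curve for $m=11\equiv 4\pmod 7$, while $m=181\equiv 6\pmod 7$ does admit an optimal curve by the quoted theorem of \cite{NR08,NR09} ($m\equiv 5\pmod 8$). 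So the check gives existence precisely for $m\equiv 5,6$, i.e. $\left(\frac{-m}{7}\right)=1$ --- the \emph{complement} of the residue classes $\{1,2,4\}$ you (and, read literally, the statement) assert. You should either correct your claimed output or note that the statement needs $-m$ in place of $m$; as written, your verification step is false. Finally, for the non-existence at $q=32$ the failure of the gluing inside the $(\ZZ/2\ZZ)^2$-family proves nothing by itself: you must use that the indecomposable unimodular Hermitian $\oo_{\QQ(\sqrt{-7})}$-module is unique, so there is a unique candidate $(A,a)$ up to twist, and then either compute its Serre obstruction directly (Serre's original argument) or justify that the corresponding curve necessarily lies in the family you are gluing in.
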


\begin{remark}
The term `explicit' in Proposition \ref{crazy} comes from the fact that for certain $(\bar{E}^3,a_0 M)$ of \cite{Rit-serre} we were able to give the equation of a curve $\bar{C}$ such that $\Jac \bar{C}$ is isomorphic to $(\bar{E}^3,a_0 M)$  using \cite{guardia}. Hence for these cases, we have a `universal' family of explicit equations for the optimal curve.
\end{remark}

The fact that the previous statement is embarrassedly cumbersome reveals either the intrinsic difficulty of the problem or a wrong attack angle. Moreover, 
the limits of this strategy already  appear in the example: the computation of the canonical lift, of the matrices $M$ and of a period matrix make it algorithmic in nature. Worse, the computation of an approximation of $\chi$ is time-consuming since one has to recognize it as an algebraic number (actually for a good choice of the model $\bar{E}$, $\chi$ is an algebraic integer). Therefore large values of the discriminant of $\End(E)$ seem out of reach.\\
 It might then be interesting to try to understand the prime decomposition of $\chi$ algebraically. Klein's formula linking $\chi_{18}$ to the square of the discriminant of plane quartics (see  \cite[Eq.118,p.462]{klein} and \cite[Th.2.23]{LRZ}) makes us think about an analogue of the N\'eron-Ogg-Shafarevich formula for elliptic curve \cite[Appendix C,16]{silverman}. We shall then interpret $\mathfrak{p} | \chi$ in terms of the nature of $(A,a):=(\bar{E}^3, a_0 M) \pmod{\mathfrak{p}}$. For instance, using \cite[p.1059]{ichi2}, $\mathfrak{p} | \chi$ if and only if $(A,a)$ is geometrically decomposable or a hyperelliptic Jacobian. Unfortunately, we do not know how to detect algebraically this last possibility (see the discussion in \cite[Sec.4.5.1]{HDRritz}).
\begin{remark}
We have not spoken yet about the case $q$ square when $p>2$. First, when $p \equiv 3 \pmod{4}$, one knows \cite[p.2]{ibug3} that there exists an optimal genus $3$ curve. This curve is even hyperelliptic 
\cite{oort89} but not explicit (see however \cite{kodama} for some explicit sub-cases). Also  Fermat curve $x^4+y^4+z^4=0$ is optimal if $n \equiv 2 \pmod{4}$. Then, when $p \equiv 1 \pmod{4}$ and $n \equiv 2 \pmod{4}$, Ibukiyama \textit{(loc. cit.)} shows that there is  an optimal curve. Ibukiyama's strategy uses a mass formula on quaternion hermitian forms to show the descent of an indecomposable  principal polarization on a model over $\FF_p$ of  $E^3$  where $E/\FF_{p^2}$ is an elliptic curve with trace $-2p=-m$. The abelian threefold and its quadratic twist being isomorphic over $\FF_{p^2}$, he avoids the issue of computing Serre's obstruction.
\end{remark}

\subsection{The geometric approach}
Following a construction of Recillas \cite{recillasprym}, we were able  to give in \cite{BeauRit} a geometric characterization of Serre's obstruction. For the sake of simplicity, let us assume that $\chr k \ne 2$ and that $(A,a)/k$ is geometrically the Jacobian of a non hyperelliptic genus $3$ curve. Since $k$ is a finite field, there exists a symmetric theta divisor $\Theta$ (for the polarization $a$) defined over $k$. Let $\Sigma$ be the union of $2_* \Theta$ and of the unique divisor in $|2\Theta|$ with multiplicity greater than or equal to $4$ at $0$. 
\begin{proposition}
Let $\alpha   \in A(\bar{k}) \setminus \{0\}$. The curve $\tilde{X}_{\alpha}=\Theta \cap (\Theta+\alpha)$ is smooth and connected if and only if $\alpha \in A(\bar{k}) \setminus \Sigma$.
\end{proposition}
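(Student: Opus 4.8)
The plan is to reduce the statement to the classical Prym-theoretic description of the double cover $\widetilde{X}_\alpha = \Theta \cap (\Theta + \alpha) \to X_\alpha$, where $X_\alpha$ is the image of $\widetilde{X}_\alpha$ under the map $x \mapsto x - (\tau_\alpha x)$ composed with translation, $\tau_\alpha$ being translation by $\alpha$; here $\widetilde{X}_\alpha$ carries the fixed-point-free involution $x \mapsto \tau_\alpha(-x)$ (using that $\Theta$ is symmetric and $\alpha \neq 0$). Over $\bar k$ we may work entirely on the principally polarized abelian threefold $(A,a)$, which by hypothesis is geometrically a non-hyperelliptic Jacobian $\Jac \bar C$; so without loss of generality $A = \Jac \bar C$ for a smooth plane quartic $\bar C/\bar k$ and $\Theta = W_2 = \{[P+Q]\}$ a symmetric theta divisor. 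The point is that smoothness and connectedness of $\widetilde{X}_\alpha$ are both \emph{geometric} properties, so it suffices to prove the equivalence over $\bar k$, and then the stated divisor $\Sigma$ (which is defined over $k$ precisely because $\Theta$ is) is the right exceptional locus.

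First I would handle smoothness. The intersection $\Theta \cap (\Theta + \alpha)$ inside the threefold $A$ is, by adjunction/Bertini-type reasoning, smooth of the expected dimension $1$ away from the points where $\Theta$ and $\Theta + \alpha$ are tangent, i.e.\ where the two translated theta divisors fail to meet transversally. A point $x \in \Theta \cap (\Theta+\alpha)$ is a singular point of the intersection iff the tangent spaces coincide, which via the Gauss map $\mathcal{G}\colon \Theta_{\mathrm{sm}} \to \mathbb{P}(T_0A)^\v) = \mathbb{P}^2$ means $\mathcal{G}(x) = \mathcal{G}(x-\alpha)$ (where defined), or $x$ or $x-\alpha$ is a singular point of $\Theta$ — but $W_2$ for a non-hyperelliptic genus $3$ curve is smooth, so $\Theta$ itself is smooth and only the Gauss condition survives. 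The classical fact (going back to the analysis of $2\Theta$ and the Gauss map for quartics, cf.\ Recillas' construction) is that the locus of $\alpha$ for which there exists such an $x$ with $\mathcal{G}(x) = \mathcal{G}(x-\alpha)$ is exactly $2_*\Theta$ together with the special element of $|2\Theta|$ singular to order $\geq 4$ at the origin; this is where I would invoke the Recillas/\cite{BeauRit} dictionary. So $\widetilde{X}_\alpha$ is smooth iff $\alpha \notin \Sigma$.

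Next, connectedness. Here I would argue that for $\alpha \notin \Sigma$, $\widetilde{X}_\alpha$ is an ample divisor (numerically $\Theta|_\Theta$, hence ample on the surface $\Theta$) cut out on the smooth ample surface $\Theta$, so by the Lefschetz hyperplane theorem / Fulton–Hansen-type connectedness it is connected as soon as it is nonempty and smooth of positive dimension — nonemptiness being clear since $\Theta \cdot (\Theta+\alpha) \cdot$ (class) $> 0$. The only thing to rule out is that $\widetilde{X}_\alpha$ could be smooth but disconnected; but a disconnected effective divisor in the class $\Theta|_\Theta$ on the surface $\Theta$ would force a decomposition contradicting ampleness (the class $\Theta|_\Theta$ is primitive enough that it cannot split off an effective piece and leave an effective piece), and moreover the Prym double cover $\widetilde{X}_\alpha \to X_\alpha$ being connected is equivalent to the $2$-torsion class defining it being nonzero, which holds exactly because $\alpha \neq 0$ and $\alpha \notin \Sigma$. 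Conversely, for $\alpha \in \Sigma \setminus\{0\}$ I would exhibit the degeneration directly: either $\widetilde{X}_\alpha$ acquires a node (from the Gauss tangency, $\alpha \in 2_*\Theta$), or it degenerates in the way forced by the order-$\geq 4$ divisor, in each case failing to be smooth; this gives the reverse implication and completes the equivalence.

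The main obstacle I expect is the precise identification of the bad locus as exactly $\Sigma = 2_*\Theta \cup D$, $D$ the unique member of $|2\Theta|$ with $\mathrm{mult}_0 \geq 4$: showing that the Gauss-tangency condition "$\exists x$ with $\mathcal G(x)=\mathcal G(x-\alpha)$" carves out precisely these two components, with the right multiplicities, requires the detailed theory of the quartic's bitangents and the Aronhold/theta structure, i.e.\ the heart of the Recillas construction. I would organize that computation by pulling back to the quartic: $\mathcal G(x) = \mathcal G(x-\alpha)$ translates into a statement about two effective degree-$2$ divisors on $\bar C$ lying on a common line or on lines meeting $\bar C$ in a shared residual point, and the resulting condition on $\alpha$ is analyzed via the $|2\Theta|$ embedding $A \to \mathbb{P}^7$ (the Coble/Kummer picture). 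Everything else — symmetry of $\Theta$, smoothness of $W_2$, the Lefschetz connectedness input, and descent of $\Sigma$ to $k$ — is standard and I would state it with references rather than proving it.
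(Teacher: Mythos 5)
Your overall architecture is sound and is essentially the one used in the cited source \cite{BeauRit}: connectedness is nearly free, and the entire content of the statement sits in identifying the non-smoothness locus. On connectedness your ampleness/Hodge-index argument works, and it can be made cleaner and uniform (including for non-reduced intersections): the Koszul resolution of $\mathcal{O}_{\tilde{X}_{\alpha}}$ together with the vanishing $H^i(A,L^{-1})=0$, $i<3$, for $L$ ample on an abelian threefold gives $h^0(\mathcal{O}_{\tilde{X}_{\alpha}})=1$ for every $\alpha\neq 0$, so $\tilde{X}_{\alpha}$ is connected whenever it is a curve. By contrast, your detour through ``the $2$-torsion class defining the double cover'' is backwards: that criterion governs connectedness of $\tilde{X}_{\alpha}$ \emph{over} $X_{\alpha}$, but here $X_{\alpha}$ is defined as a quotient of $\tilde{X}_{\alpha}$, so it proves nothing and should be dropped.

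The genuine gap is exactly the step you flag as the main obstacle: you reduce smoothness to the nonexistence of $x$ with $\mathcal{G}(x)=\mathcal{G}(x-\alpha)$ and then \emph{invoke} the identification of this tangency locus with $\Sigma=2_*\Theta\cup D$ as ``the classical fact'' from the very result being proved; as written this is circular. The missing computation is short and you should carry it out. Writing $\Theta=C^{(2)}-\kappa$ with $2\kappa=K_{\bar{C}}$, a fibre of the Gauss map over a line meeting the quartic in $P_1,\dots,P_4$ consists of the six classes $[P_i+P_j]-\kappa$; two distinct elements $x,y$ of a fibre are either complementary, in which case $y=-x$ and $\alpha=x-y=2x\in 2_*\Theta$, or share a point, in which case $\alpha=[P_i-P_j]$ lies on the difference surface $\bar{C}-\bar{C}$. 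You then need the second half, which your sketch omits entirely: $\bar{C}-\bar{C}$ belongs to $|2\Theta|$ (Pontryagin product and the Poincar\'e formula give class $2\theta$), has multiplicity exactly $4$ at the origin (its tangent cone there is the cone over the canonical quartic), and is the \emph{unique} divisor of $|2\Theta|$ with that property (all divisors in $|2\Theta|$ are symmetric, so multiplicity $\geq 4$ at $0$ imposes $1+6=7$ conditions on the $8$-dimensional space $H^0(2\Theta)$). Without this identification of $D$ with $\bar{C}-\bar{C}$, your $\Sigma$ and your tangency locus are two unrelated objects and the equivalence is not established; the reverse inclusion --- that every nonzero $\alpha$ in $2_*\Theta$ or in $\bar{C}-\bar{C}$ really produces a tangency point --- follows by running the same dictionary backwards and deserves a sentence as well.
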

Hence, the divisor $\Sigma$ represents a bad locus that needs to be avoided in the sequel. Assuming that $\alpha \notin \Sigma$, the involution $(z \mapsto \alpha-z)$ of $\tilde{X}_{\alpha}$ is fixed point free and so  $X_{\alpha}=\tilde{X}_{\alpha}/(z \mapsto \alpha-z)$ is a smooth genus $4$ curve.
\begin{proposition}
The curve $X_{\alpha}$ is non hyperelliptic and its canonical model in $\PP^3$ lies on a quadric $Q_{\alpha}$ which is smooth.
\end{proposition}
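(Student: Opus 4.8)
The plan is to identify $X_\alpha$ as a genus-$4$ curve, invoke the classical description of such curves in $\PP^3$, and read off both assertions from the Recillas/Prym picture that produced $X_\alpha$.

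\emph{Step 1: $g(X_\alpha)=4$.} Since $(A,a)$ is geometrically the Jacobian of a non-hyperelliptic genus-$3$ curve, the theta divisor $\Theta$ is a smooth surface, and $\tilde X_\alpha=\Theta\cap(\Theta+\alpha)$ is the intersection on $\Theta$ of the divisor $(\Theta+\alpha)|_\Theta$, which is numerically equivalent to $\Theta|_\Theta$. By adjunction $K_\Theta=\Theta|_\Theta$, so $2g(\tilde X_\alpha)-2=\tilde X_\alpha\cdot(\tilde X_\alpha+K_\Theta)=2\,\Theta^3=12$, i.e. $g(\tilde X_\alpha)=7$; as $\tilde X_\alpha\to X_\alpha$ is étale of degree $2$, Riemann--Hurwitz gives $g(X_\alpha)=4$.

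\emph{Step 2: reduction to $g^1_3$'s.} Recall the classical trichotomy for a genus-$4$ curve $X$: either $X$ is hyperelliptic, the canonical map being $2{:}1$ onto a twisted cubic with $X$ carrying no base-point-free $g^1_3$; or $X$ is non-hyperelliptic and its canonical model lies on a unique quadric $Q\subset\PP^3$, which is a cone exactly when $X$ has a single base-point-free $g^1_3$ — then a theta characteristic ($L^{\otimes 2}\cong K_X$, $h^0(L)=2$, a vanishing even thetanull) — and is smooth exactly when $X$ has two distinct base-point-free $g^1_3$'s, cut by the two rulings of $Q\cong\PP^1\times\PP^1$. Thus it suffices to show that $X_\alpha$ is non-hyperelliptic and that its $g^1_3$ is not half-canonical.

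\emph{Step 3: the Prym argument.} The construction is set up so that $\mathrm{Prym}(\tilde X_\alpha/X_\alpha)\cong(A,a)$ as principally polarized abelian varieties, and $(A,a)$ is geometrically an \emph{indecomposable} principally polarized abelian variety which is moreover a \emph{non-hyperelliptic} Jacobian. If $X_\alpha$ were hyperelliptic, Mumford's description of Pryms of double covers of hyperelliptic curves would exhibit this Prym as decomposable (a product of hyperelliptic Jacobians with its product polarization), contradicting indecomposability; hence $X_\alpha$ is non-hyperelliptic and embeds canonically onto $Q_\alpha$. Being a non-hyperelliptic genus-$4$ curve, $X_\alpha$ is trigonal; fix a base-point-free $g^1_3$ on it. Recillas' trigonal construction applied to $X_\alpha$, this $g^1_3$ and the cover $\tilde X_\alpha\to X_\alpha$ yields a smooth genus-$3$ curve $C'$ with a $g^1_4$ and $\Jac C'\cong\mathrm{Prym}(\tilde X_\alpha/X_\alpha)\cong(A,a)$, so by Torelli $C'$ is (over $\bar k$) our non-hyperelliptic curve $C$, in particular not hyperelliptic. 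Since in Recillas' construction the chosen $g^1_3$ is half-canonical precisely when the output tetragonal curve degenerates (becomes hyperelliptic or reducible), the $g^1_3$ is not half-canonical, $Q_\alpha$ is not a cone, and therefore $Q_\alpha$ is smooth.

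\emph{Expected main obstacle.} The delicate point is the last equivalence: that $Q_\alpha$ being a cone corresponds to a \emph{degenerate} Recillas output, equivalently that excluding the bad locus $\Sigma$ (the divisors $2_*\Theta$ and the unique member of $|2\Theta|$ of multiplicity $\geq 4$ at $0$) really forces $X_\alpha$ onto a \emph{smooth} quadric rather than merely onto some quadric. I would make this precise by translating ``$X_\alpha$ has a vanishing even thetanull'' into the vanishing of an explicit theta constant attached to a period matrix of $X_\alpha$ built from $\tau_a$ and a lift of $\alpha$, and then checking that the resulting locus of $\alpha$ lies inside $\Sigma$; keeping track of the $2$-torsion and of the normalizations in this comparison is where the real work lies.
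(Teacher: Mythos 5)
Your reduction is the right one, and it is the one the paper has in mind: the survey does not prove this proposition (it is imported from \cite{BeauRit}), but the sketch of the theorem that follows it reveals the intended mechanism, namely that Recillas' construction produces \emph{two explicit} base-point-free $g^1_3$'s on $X_\alpha$, which simultaneously forces $X_\alpha$ to be non-hyperelliptic and forces the quadric to be smooth. Your Step 1 (adjunction on the smooth surface $\Theta$ giving $g(\tilde X_\alpha)=7$, hence $g(X_\alpha)=4$) and Step 2 (the trichotomy for genus-$4$ canonical curves) are correct. Two points in Step 3 do not close, however. First, the non-hyperellipticity argument: Mumford's description of Pryms of \'etale double covers of a hyperelliptic genus-$4$ curve gives $\Jac C_1\times\Jac C_2$ with $g_1+g_2=3$, and the partition $(0,3)$ produces an \emph{indecomposable} Prym, namely the Jacobian of a hyperelliptic genus-$3$ curve; so ``contradicting indecomposability'' is not enough. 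You must also invoke the standing hypothesis that $(A,a)$ is geometrically a \emph{non-hyperelliptic} Jacobian, together with Torelli, to kill that case. This is fixable but as written the contradiction is misattributed.

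The genuine gap is the smoothness of $Q_\alpha$. Everything there rests on the claim that ``the chosen $g^1_3$ is half-canonical precisely when the output of the trigonal construction degenerates,'' which you assert without proof and which is not a standard quotable fact in this form; the degeneration dictionary for the trigonal construction is genuinely delicate (for instance, a naive reading of the $\pm\alpha$ symmetry would suggest that $2$-torsion $\alpha$ should give a cone, yet such $\alpha$ need not lie in $\Sigma$ and the proposition asserts smoothness for them). In addition, the identification $\mathrm{Prym}(\tilde X_\alpha/X_\alpha)\cong(A,a)$, which your whole Step 3 leans on, is itself the main computation of the construction and is only asserted (``the construction is set up so that\ldots''). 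Your closing paragraph correctly identifies where the work lies, but what it offers is a plan, not an argument. The direct route, and the one consistent with the paper's sketch, is to write down the two candidate $g^1_3$'s concretely --- e.g.\ as the pullbacks to $X_\alpha$ of the two pencils $|K_C+\alpha_0|$ and $|K_C-\alpha_0|$, equivalently the two projections $z\mapsto z$ and $z\mapsto z-\alpha$ of $\tilde X_\alpha\subset\Theta\cap\Theta_\alpha$ into $\Theta\simeq C^{(2)}$ --- and then prove, using precisely the exclusion $\alpha\notin\Sigma$ (i.e.\ $\alpha\notin 2_*\Theta$ and $\alpha\notin C-C$), that they are base-point-free and distinct. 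Without that explicit verification the proposition is not proved.
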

To go further, we need to assume that $\alpha$ is rational. When $k$ is big enough, such an $\alpha$ always exists. We then obtain the following result.
 \begin{theorem}
Assume there exists $\alpha \in A(k) \setminus \Sigma$. Then $(A,a)$ is a Jacobian if and only if $\delta=\Disc Q_{\alpha}$ is a square in $k^*$.
 \end{theorem}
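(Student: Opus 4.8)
The plan is to connect the quadric $Q_\alpha$ appearing in the canonical model of $X_\alpha \subset \PP^3$ to Serre's obstruction via the theory of Prym varieties, following Recillas' trigonal/tetragonal construction. First I would recall that, for a smooth genus $4$ curve $X_\alpha$ whose canonical image lies on a \emph{smooth} quadric $Q_\alpha$, the two rulings of $Q_\alpha$ cut out on $X_\alpha$ two distinct $g^1_3$'s, hence two trigonal structures; the Recillas construction applied to one of them recovers a genus $3$ curve together with a torsion point, and the Prym variety $\mathrm{Prym}(X_\alpha/\tilde{X}_\alpha/\ldots)$ attached to the étale double cover $\tilde{X}_\alpha \to X_\alpha$ (induced by the fixed-point-free involution $z \mapsto \alpha - z$) is isomorphic, as a principally polarized abelian variety, to $(A,a)$ itself. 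The key point I would establish is that \emph{over $k$}, the curve $X_\alpha$ and the quadric $Q_\alpha$ are defined over $k$ (because $\alpha \in A(k)$ and $\Theta$ is defined over $k$), so the two rulings of $Q_\alpha$ are either both defined over $k$ or conjugate over the quadratic extension $k(\sqrt{\delta})$, where $\delta = \Disc Q_\alpha$; this dichotomy is exactly governed by whether $\delta$ is a square in $k^*$.

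The second step is to match this dichotomy with the trichotomy of the Arithmetic Torelli theorem (Theorem~\ref{torelli}). Geometrically, $(A,a)$ is a non-hyperelliptic Jacobian by hypothesis, so over $\bar{k}$ there is a genus $3$ curve $\bar{C}$ with $(\Jac \bar{C}, j) \simeq (A,a)$. The Recillas construction produces, from a choice of ruling on $Q_\alpha$, a genus $3$ curve whose Jacobian is $(A,a)$ — i.e., a twist of $\bar C$ — and I would show that the \emph{field of definition} of this genus $3$ curve is precisely $k(\sqrt{\delta})$: the curve $\bar C$ descends to $k$ exactly when a ruling of $Q_\alpha$ is $k$-rational. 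Then Serre's obstruction $\varepsilon$ is trivial (so $(A,a)$ is a Jacobian over $k$) if and only if $\bar C$ descends to $k$, if and only if $Q_\alpha$ has a $k$-rational ruling, if and only if $\delta \in (k^*)^2$. For this I would use that a smooth quadric surface over $k$ is isomorphic to $\PP^1 \times \PP^1$ over $k$ precisely when its discriminant is a square, and otherwise its two rulings are swapped by $\Gal(k(\sqrt\delta)/k)$ — and that this Galois action on the rulings induces exactly the quadratic character $\varepsilon$ via the Recillas dictionary.

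A more careful bookkeeping step is needed to handle the possibility that, even when $\delta$ is \emph{not} a square, the curve one gets is hyperelliptic rather than the quadratic twist of $\bar C$; but this cannot happen here because a hyperelliptic genus $3$ Jacobian has $\chi_{18} = 0$ and, more to the point, the starting hypothesis is that $(A,a)$ is \emph{geometrically} a non-hyperelliptic Jacobian, so the hyperelliptic branch of Theorem~\ref{torelli} is excluded for every twist of $(A,a)$ over $k$. One should also check that the quadric $Q_\alpha$ is genuinely smooth for rational $\alpha \notin \Sigma$ (guaranteed by the preceding Proposition) so that the notion of two rulings and of $\Disc Q_\alpha \in k^*/(k^*)^2$ makes sense, and that $\Disc Q_\alpha$ is independent of the choices (change of coordinates on $\PP^3$ multiplies the quadratic form's matrix by a square of a determinant, hence fixes the class of $\delta$).

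The main obstacle I expect is the precise identification, at the level of arithmetic (not just over $\bar k$), of the Recillas-constructed genus $3$ curve with a \emph{specific} twist of $\bar C$ — one must verify that it is the quadratic twist attached to the \emph{same} character $\varepsilon$ as in the Arithmetic Torelli theorem, rather than some other twist, and that the Prym-canonical identification $(\Jac(\text{genus 3 curve}), j) \simeq (A,a)$ is compatible with $\Gal(\bar k/k)$-action. This is where the geometry of \cite{recillasprym} and the explicit Prym construction of \cite{BeauRit} must be invoked in a Galois-equivariant form; once that compatibility is in place, the equivalence $\bigl(\delta \in (k^*)^2\bigr) \iff \bigl((A,a) \text{ is a Jacobian over } k\bigr)$ follows formally from Theorem~\ref{torelli} together with the standard fact about rulings of smooth quadric surfaces over finite fields.
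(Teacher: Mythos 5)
Your proposal follows essentially the same route as the paper: the canonical genus $4$ model of $X_{\alpha}$ on a smooth quadric, the two rulings giving two $g^1_3$'s whose rationality over $k$ is equivalent to $\Disc Q_{\alpha}$ being a square, and Recillas' construction tying the rationality versus conjugacy of these $g^1_3$'s to whether $(A,a)$ or its quadratic twist is the Jacobian over $k$. One small caution: by the arithmetic Torelli theorem the genus $3$ curve $\bar{C}$ \emph{always} descends to $k$ — what the ruling's field of definition governs is whether the descended model has Jacobian $(A,a)$ or $(A,a)_{\varepsilon}$ — but you handle this correctly in the rest of your argument, so the proof is in line with the paper's.
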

Let us sketch the proof. A non hyperelliptic genus $4$ curve $X$ lies canonically in $\PP^3$ on the intersection of a unique quadric $Q$ and a cubic surface $E$. If we assume that $Q$ is smooth, then $X$ has two  $g^1_3$ coming from the two rulings of $Q$ by intersecting them with $E$. Moreover, an easy computation shows that $\Disc Q$  is a square if and only if these two $g^1_3$ are defined over $k$. Now Recillas' construction, which can be used when
$(A,a)$ is the Jacobian of a curve, shows that $X_{\alpha}$ has two (rather explicit) rational $g^1_3$. To conclude, it is then enough to show that a quadratic twist of $(A,a)$ (which is no more a Jacobian) leads to two conjugate $g^1_3$.\\

The advantage of this approach is that it stays over the finite field $k$ and is completely algebraic. Unfortunately, so far, we do not see how to compute $\delta$ for  $A=E^3$ and $a=a_0 M$ in terms of an equation of $E$ and the coefficients of $M$. The main difficulty seems to find an equation (or even points) on a theta divisor in order to compute an equation of $Q_{\alpha}$.

\bibliographystyle{alpha}
\bibliography{synthbib}

\end{document}